\theoremstyle{plain}
\newtheorem{theorem}{Theorem}[section]
\newtheorem{proposition}[theorem]{Proposition}
\newtheorem{lemma}[theorem]{Lemma}
\newtheorem{corollary}[theorem]{Corollary}
\theoremstyle{definition}
\newtheorem{definition}[theorem]{Definition}
\newtheorem{example}[theorem]{Example}
\newtheorem{remark}[theorem]{Remark}
\numberwithin{equation}{section}
\begin{document}

%opening
\title{The structure of multigranular rough sets}

\author{Jouni J{\"a}rvinen}
\address[J.~J{\"a}rvinen]{Department of Mathematics and Statistics, University of Turku, 20014 Turku, Finland}
\email{jjarvine@utu.fi}

\author{S{\'a}ndor Radeleczki}
\address[S.~Radeleczki]{Institute of Mathematics, University of Miskolc, 3515~Miskolc-Egyetemv{\'a}ros, Hungary}
\email{matradi@uni-miskolc.hu} 
           
\thanks{The work of the second author was carried out as a part of the EFOP-3.6.1-16-00011 ``Younger and
Renewing University -- Innovative Knowledge City'' project implemented in the framework of the
Sz{\'e}chenyi 2020 program, supported by the European Union, co-financed by the European Social
Fund.}

\begin{abstract}
We study multigranulation spaces of two equivalences.
The lattice-theoretical properties  of so-called ``optimistic'' and ``pessimistic''  multigranular approximation systems
are given. We also consider the ordered sets of rough sets determined by these approximation pairs.
\end{abstract}

\keywords{Equivalence relation, multigranular approximation, definable set, rough set, coherence, tolerance relation,
  irredundant covering, atomistic Boolean lattice, completely distributive lattice, Dedekind--MacNeille completion}

\maketitle

%###
%\documentclass[leqno]{fundam}
%
%\usepackage{amsmath}
%\usepackage{amssymb} 
%\usepackage{amsfonts}
%\usepackage{enumerate}
%\usepackage{graphicx}
%
%\numberwithin{equation}{section}
%\tolerance=5000 
%
%\begin{document}
%
%%opening
%\title{The structure of multigranular rough sets}
%\runninghead{J.~J{\"a}rvinen, S.~Radeleczki}{The structure of multigranular rough sets}
%           
%\author{Jouni J{\"a}rvinen\\
%  Department of Mathematics and Statistics\\
%  University of Turku, 20014 Turku, Finland \\
%Email: jjarvine@utu.fi
%\and
%S{\'a}ndor Radeleczki\thanks{The work of the second author was carried out as a part of the EFOP-3.6.1-16-00011 ``Younger and
%Renewing University -- Innovative Knowledge City'' project implemented in the framework of the
%Sz{\'e}chenyi 2020 program, supported by the European Union, co-financed by the European Social Fund.}\\
%Institute of Mathematics, University of Miskolc\\ 
%3515~Miskolc-Egyetemv{\'a}ros, Hungary\\
%Email: matradi@uni-miskolc.hu}
%           
%\maketitle
%
%\begin{abstract}
%We study multigranulation spaces of two equivalences.
%The lattice-theoretical properties  of so-called ``optimistic'' and ``pessimistic''  multigranular approximation systems
%are given. We also consider the ordered sets of rough sets determined by these approximation pairs.
%\end{abstract}
%
%\keywords{Equivalence relation, multigranular approximation, definable set, rough set, coherence, tolerance relation,
%  irredundant covering, atomistic Boolean lattice, completely distributive lattice, Dedekind--MacNeille completion}

\section{Introduction and preliminaries} \label{Sec:IntroPrel}

Rough approximation operations were introduced by Z.~Pawlak in \cite{Pawl82}. In rough set theory, it is assumed
that our knowledge about the objects of a universe of discourse $U$ is given in terms of an equivalence
relation $E$ on $U$. In the literature, there are generalizations in which rough sets are defined in terms
of an arbitrary binary relation \cite{YaoLin1996}.

A \emph{tolerance} is reflexive and symmetric binary relation. In this work, tolerances are interpreted as similarity relations.
Let $T$ be a tolerance on a set $U$. If $x \, T \, y$, then $x$ and $y$ are considered similar
in terms of the information represented by $T$. If a tolerance $E$ is also transitive, it is an
equivalence relation and $E$ is interpreted as an \emph{indistinguishablity} relation such that $x \, E \, y$ means that 
we are not able to distinguish $x$ from $y$ in terms of the information given by $E$.
The properties of rough approximations defined by tolerances are well-studied; see e.g.~\cite{JarRad19} for more details.

For any $x \in U$, we denote by
\[ T(x) = \{ y \in U \mid x \, T \, y \} \]
the \emph{$T$-neighbourhood} of $x$. It consists of objects that are $T$-similar to $x$.
For any $X \subseteq U$, the \emph{lower $T$-approximation} is
\[
X_T = \{ x \in U \mid T(x) \subseteq X \}.
\]
The set $X_T$ is considered as the set of objects certainly belonging to $X$, because if $x \in X_T$,
then all objects $T$-similar to $x$ are in $X$. The \emph{upper approximation} of $X$ is
\[
X^T = \{ x \in U \mid T(x) \cap X \neq \emptyset \}.
\]
The set $X^T$ can be viewed as the set of elements possibly belonging to $X$, because  $x \in X^T$
means that in $X$ there is at least one element $T$-similar to $x$. For $X \subseteq U$, we denote
the \emph{complement} $U \setminus X$ by $X^c$ and $\wp(U) = \{X \mid X \subseteq U\}$ is the \emph{power set} of $U$.
The following properties are well known for all $X,Y \subseteq U$ and $\mathcal{H} \subseteq \wp(U)$:
\begin{enumerate}[\rm ({T}1)]
\item $\emptyset_T = \emptyset^T = \emptyset$ and $U_T = U^T = U$;
\item $(\bigcup \mathcal{H})^T = \bigcup \{X^T \mid X \in \mathcal{H} \}$ and
  $(\bigcap \mathcal{H})_T = \bigcap \{X_T \mid X \in \mathcal{H} \}$:
\item $(X^c)^T = (X_T)^c$ and  $(X^c)_T = (X^T)^c$;
\item $X_T \subseteq X \subseteq X^T$;
\item $X \subseteq Y$ implies $X_T \subseteq Y_T$ and $X^T \subseteq Y^T$;
\item $(X_T)^T \subseteq X \subseteq (X^T)_T$.
\end{enumerate}
Note that all these conditions are not independent and from these it follows that $X^T = ((X^T)_T)^T$ and $X_T = ((X_T)^T)_T$.
In addition, if $E$ is an equivalence, then
\begin{enumerate}[\rm ({E}1)]
\item $(X_E)^E = (X_E)_E = X_E$ \ and \ $(X^E)_E = (X^E)^E = X^E$.
\end{enumerate}
It is now clear that for an equivalence $E$ on $U$, the map $X \mapsto X^E$ is a lattice-theoretical \emph{closure operator}, that is,
it is extensive, order-preserving and idempotent; see e.g.\@\cite{bs81} for the definition. Note that the notion of a lattice-theoretical
closure operator  is more general than that of a topological closure operator since we do not require that the union of two closed
subsets be closed. 
We also have that $X \mapsto X_E$ is a lattice-theoretical \emph{interior operator}, that is, it is contractive, order-preserving,
and idempotent.

Note that in this work, we denote an ordered set $(L,\leq)$ often simply by $L$ if there is no danger of confusion about the order. 
Let $L$ be a lattice with a least element $0$. An element $a \in L$ is an \emph{atom} if $0 \prec a$, that is, $0$ is covered by $a$.
The lattice $L$ is atomistic, if each its element $x$ is the join of the atoms below $x$.

Let us denote 
\[
\wp(U)^E = \{ X^E \mid X \subseteq U \} \text{ \ and \ } \wp(U)_E = \{ X_E \mid X \subseteq U \}. \]
We say that a set $X \subseteq U$ is \emph{saturated} by an equivalence $E$ if $X$ is a (possibly empty) union of $E$-classes.
Let us denote by $\mathrm{Sat}(E)$ the family of all $E$-saturated sets. It is well known that 
\[ \mathrm{Sat}(E) = \wp(U)_E = \wp(U)^E.\] 
The ordered set $(\mathrm{Sat}(E),\subseteq)$ is a complete Boolean lattice in which
\begin{equation}\tag{E2}
\bigvee \mathcal{H} = \bigcup \mathcal{H} \quad \text{ \ and \ } \quad \bigwedge \mathcal{H} = \bigcap \mathcal{H} 
\end{equation}
for any $\mathcal{H} \subseteq \mathrm{Sat}(E)$. Additionally, $\emptyset$ is the least element and $U$ is the greatest element of 
$\mathrm{Sat}(E)$. The Boolean complement of $X \in \mathrm{Sat}(E)$ is the set-theoretical complement $X^c$. In addition,
$\mathrm{Sat}(E)$ is an atomistic lattice in which the atoms are the $E$-classes. 

For a tolerance $T$, the map $X \mapsto X^T$ is not a closure operator, because $(X^T)^T = X^T$ does not
necessarily hold. However, the operator $\Diamond \colon \wp(U) \to \wp(U)$ defined by
\[ \Diamond X = ( X^T)_T  \]
is a closure operator on $U$, and the corresponding \emph{closure system} is
\[
\wp(U)_T = \{ X_T \mid X \subseteq U \} =  \{ \Diamond X \mid X \subseteq U \} =  \{ A \subseteq U \mid \Diamond A = A \}.
\]
From the general properties of closure operators it follows that $\wp(U)_T$ is a complete lattice in which 
\begin{equation}\tag{T6}
\bigwedge \mathcal{H} = \bigcap \mathcal{H} \qquad \text{and} \qquad \bigvee \mathcal{H} = \Diamond \big( \bigcup \mathcal{H} \big)
\end{equation}
for $\mathcal{H} \subseteq \wp(U)_T$. Similarly,
\[ \Box X =  ( X_T)^T \]
defines an interior operator on $U$, and the corresponding interior system is
\[
\wp(U)^T = \{ X^T \mid X \subseteq U \} =  \{ \Box X \mid X \subseteq U \} =  \{ A \subseteq U \mid \Box A = A \}.
\]
Hence, $\wp(U)^T$ is a complete lattice in which 
\begin{equation}\tag{T7}
\bigvee \mathcal{H} = \bigcup \mathcal{H} \qquad \text{and} \qquad \bigwedge \mathcal{H} = \Box \big( \bigcap \mathcal{H} \big)
\end{equation}
for any $\mathcal{H} \subseteq \wp(U)^T$. 

A mapping $x \mapsto x^\bot$ on  a bounded lattice $L$ is called an \emph{orthocomplementation},
and $x^\bot$ an \emph{orthocomplement} of $x$, if for all $x, y \in L$: 
\begin{enumerate}[({O}1)]
 \item $x \leq y$ implies $y^\bot \leq x^\bot$ \hfill (order-reversing)
 \item $x^{\bot \bot} = x$ \hfill (involution)
 \item $x \vee x^\bot = 1$ and $x \wedge x^\bot = 0$  \hfill (complement)
\end{enumerate}
An \emph{ortholattice} is a bounded lattice equipped with an orthocomplementation. Note that orthocomplementations are not always unique.
We noted in \cite{JarRad14} that for any tolerance $T$ on $U$, $\wp(U)^T$ and $\wp(U)_T$ are ortholattices. 
For any $A \in \wp(U)^T$, its orthocomplement is $(A^c)^T$. Analogously, the orthocomplement of $B \in \wp(U)_T$ is
$(B^c)_T$.

A complete lattice $L$ is \emph{completely distributive} if for any doubly indexed
subset $\{x_{i,\,j}\}_{i \in I, \, j \in J}$ of $L$, 
\[
\bigwedge_{i \in I} \Big ( \bigvee_{j \in J} x_{i,\,j} \Big ) = 
\bigvee_{ f \colon I \to J} \Big ( \bigwedge_{i \in I} x_{i, \, f(i) } \Big ), \]
that is, any meet of joins may be converted into the join of all possible elements obtained by taking the meet over $i \in I$ of
elements $x_{i,\,k}$\/, where $k$ depends on $i$.  A collection $\mathcal{H}$ of nonempty subsets of $U$ is called a
\emph{covering} of $U$ if $\bigcup \mathcal{H} = U$. A covering $\mathcal{H}$ is \emph{irredundant} if
$\mathcal{H} \setminus \{X\}$ is not a covering for any $X \in \mathcal{H}$.
Each covering $\mathcal{H}$ defines a tolerance $T_\mathcal{H} = \bigcup \{ X^2 \mid X \in \mathcal{H}\}$, 
called the \emph{tolerance induced} by $\mathcal{H}$. 

We proved in \cite{JarRad14} that for a tolerance $T$ on $U$, the complete lattices $\wp(U)_T$ and $\wp(U)^T$ are completely 
distributive if and only if $T$ is induced by an irredundant covering of $U$. From this it follows that  $\wp(U)_T$ and $\wp(U)^T$
are Boolean lattices when $T$ is induced by an irredundant covering.

For a tolerance $T$ on $U$, a nonempty subset $X$ of $U$ is a \emph{$T$-preblock} if $X \times X \subseteq T$. 
A \emph{$T$-block} is a $T$-preblock that is maximal with respect to the inclusion relation. 
Each tolerance $T$ is induced by its blocks, that is, $a \, T \, b$  if and only if there exists a block 
$B$ such that $a,b \in B$. Note that the covering consisting of $T$-blocks is not necessarily irredundant.

In \cite{JarRad19}, we showed that for all $x \in U$, $T(x)$ is a $T$-block if and only it is a $T$-preblock. We also proved that
\begin{enumerate}[({T}1)]
\setcounter{enumi}{7}
\item $T$ is induced by an irredundant covering if and only if  $\{ T(x) \mid \text{$T(x)$ is a block}\}$ induces $T$.
\end{enumerate}
A complete Boolean lattice is atomistic if and only if it is completely distributive (see e.g.\@ \cite{Grat98}). 
Thus, if $T$ is induced by an irredundant covering of $U$, then the complete lattices
$\wp(U)^T$ and  $\wp(U)_T$ are atomistic Boolean lattices. We showed in \cite{JarRad14} that $\{ T(x) \mid T(x) \text{ is a block}\, \}$ 
and $\{ T(x)_T \mid T(x) \text{ is a block}\, \}$ are their sets of atoms, respectively.

Let $T$ be a tolerance on $U$. A set $X \subseteq U$ is called \emph{$T$-definable} if $X_T = X^T$.
This means that the set of elements which certainly are in $X$ coincides with the set of elements which
possibly are in $X$. We denote by $\mathrm{Def}(T)$ the family of $T$-definable sets.
It is a well-known fact (see e.g. \cite{Jarv99}) that for all $X \subseteq U$,
\[ X \in \mathrm{Def}(T) \iff X = X^T \iff X = X_T \iff X \in \mathrm{Sat}(T^e), \]
where $T^e$ is the smallest equivalence containing $T$. Therefore, $(\mathrm{Def}(T),\subseteq)$ is a complete
atomistic Boolean lattice. For an equivalence $E$, we have 
\[ \mathrm{Def}(E) = \mathrm{Sat}(E) = \wp(U)_E = \wp(U)^E.\]

Let $T$ be a tolerance on $U$. The rough \emph{$T$-equality} is a binary relation defined on $\wp(U)$ by
\[
X \equiv_T Y \iff X_T = Y_T \text{ \ and \ } X^T = Y^T .\]
This means that $X$ and $Y$ are roughly $T$-equal if the same elements belong possibly and certainly to $X$ and $Y$ in 
view of $T$. The relation $\equiv_T$ is an equivalence on $\wp(U)$ and its equivalence classes are called $T$-\emph{rough sets}, 
or simply \emph{rough sets}. 
Each rough set $\mathcal{C} \in \wp(U) / {\equiv_T}$ is determined by the approximation pair $(X_T, X^T)$, where $X$ is any
member of $\mathcal{C}$, as was originally pointed out by T.~B.~Iwi{\'n}ski \cite{Iwin87}.
Therefore, the following collection can be viewed as the set of all $T$-rough sets on $U$:
\[\mathit{RS}(T) = \{(X_T, X^T ) \mid X \subseteq U \}.\]
The set $\mathit{RS}(T)$ can be ordered coordinatewise by 
\[
(X_T, X^T ) \leq (Y_T, Y^T ) \iff X_T \subseteq Y_T \text{ \ and \ } X^T \subseteq Y^T. 
\]
The set $\mathit{RS}(T)$ is bounded with $(\emptyset,\emptyset)$ and $(U,U)$ as the least and the greatest element, respectively.

An element $x^*$ is the \emph{pseudocomplement} of $x$ if $x \wedge x^* = 0$ and $x \wedge z = 0$ implies $z \leq x^*$.  
A lattice $L$ in which each element has a pseudocomplement is called a \emph{pseudocomplemented lattice}.
A distributive pseudocomplemented lattice is a \emph{Stone lattice} if it satisfies the identity
\begin{equation} \label{Eq:Stone} \tag{St1}
x^* \vee x^{**} = 1.
\end{equation}

Similarly, an element $x^+$ is the \emph{dual pseudocomplement} of $x$ whenever $z \geq x^+$ is equivalent to $x \vee z = 1$.
If $L$ is such that each of its elements have a pseudocomplement and a dual pseudocomplement, then $L$ is a 
\emph{double pseudocomplemented lattice}. A double pseudocomplement lattice is \emph{regular} if it satisfies the identity
\begin{equation}
x^* = y^* \text{ and } x^+ = y^+ \text{ imply } x=y. \tag{M}
\end{equation}

A \emph{double Stone lattice} is Stone lattice in which every element has a dual pseudocomplement satisfying
\begin{equation} \label{Eq:dualStone} \tag{St2}
x^+ \wedge x^{++} = 0.
\end{equation}
It was proved by J.~Pomyka{\l}a and J.~A.~Pomyka{\l}a \cite{PomPom88} that for any equivalence $E$ on $U$, $\mathit{RS}(E)$ forms
a complete lattice such that
\begin{align}\label{Eq:RS(E)_meet}
\bigwedge_{X \in \mathcal{H}} (X_E,X^E) &= \Big ( \bigcap_{X \in \mathcal{H}} X_E,  \bigcap_{X \in \mathcal{H}} X^E \Big ) \\
\intertext{and}
\label{Eq:RS(E)_join}
\bigvee_{X \in \mathcal{H}} (X_E,X^E) &= \Big ( \bigcup_{X \in \mathcal{H}} X_E, \bigcup_{X \in \mathcal{H}} X^E \Big  ) .
\end{align}
In addition, they proved that each element $(A,B)$ of $\mathit{RS}(E)$ has a pseudocomplement
\[ (A,B)^* = (B^c,B^c) \]
and that $\mathit{RS}(E)$ forms a Stone lattice.
The result was complemented by S.~D.~Comer in \cite{Comer} by showing that $\mathit{RS}(E)$ forms actually a regular double Stone lattice, 
in which 
\[
(A, B)^+ = (A^c, A^c )
\]
for $(A,B) \in \mathit{RS}(E)$.

It is known that if $T$ is a tolerance, then $\mathit{RS}(T)$ is not necessarily even a lattice \cite{Jarv99}. 
We showed in \cite{JarRad14} that $\mathit{RS}(T)$ is a complete lattice if and only if $\mathit{RS}(T)$ is a complete sublattice of 
the direct product $\wp(U)_T \times \wp(U)^T$.  This means that if $\mathit{RS}(T)$ is a complete lattice, then for 
$\{(X_T,X^T)\}_{X \in \mathcal{H}} \subseteq \mathit{RS}(T)$,
\begin{align}\label{Eq:RS_lattice_meet}
\bigwedge_{X \in \mathcal{H}} (X_T,X^T) &= \Big ( \bigcap_{X \in \mathcal{H}} X_T, 
\Box \big ( \bigcap_{X \in \mathcal{H}} X^T \big ) \Big ) \\
\intertext{and}
\label{Eq:RS_lattice_join}
\bigvee_{X \in \mathcal{H}} (X_T,X^T) &= \Big ( \Diamond \big ( \bigcup_{X \in \mathcal{H}} X_T \big ), 
         \bigcup_{X \in \mathcal{H}} X^T \Big  ) .
\end{align}
In addition, we proved \cite[Theorem~4.8]{JarRad14} that $\mathit{RS}(T)$ is a completely distributive lattice if and only if 
$T$ is induced by an irredundant covering. We also showed in \cite{JarRad18} that if $T$ is a tolerance induced by an irredundant covering, 
then $\mathit{RS}(T)$ forms a regular double pseudocomplemented lattice such that for any $(A,B) \in \mathit{RS}(T)$,
\[
(A,B)^* = ((B^c)_T, (B^c)^T)  \text{\qquad and \qquad} (A,B)^+ = ((A^c)_T, (A^c)^T) .
\]

 For an ordered set $(P,\leq)$, a mapping ${\sim} \colon P \to P$ satisfying (O1) and (O2) is called a \emph{polarity}.
Such a polarity $\sim$ is an order-isomorphism from $(P,\leq)$ to its dual $(P,\geq)$, and we say that $P$ is \emph{self-dual}.
The Hasse diagram of a self-dual ordered set looks the same when it is  turned upside-down. An ordered set may have several
polarities. If $L$ is a complete lattice with a polarity $\sim$, then for all $S \subseteq L$,
\[ {\sim} \big ( \bigvee S \big ) = \bigwedge \{ {\sim} x \mid x \in S\} \quad \text{and} \quad
   {\sim} \big ( \bigwedge S \big ) = \bigvee \{ {\sim} x \mid x \in S\}. \]
A complete lattice with a polarity is called a \emph{complete polarity lattice}.   
Note also that a pseudocomplemented lattice $L$ with a polarity $\sim$ is a double pseudocomplemented lattice in which
for $x \in L$,
\[
{\sim}(x^+) = ({\sim} x)^* \qquad \text{and} \qquad {\sim}(x^*) = ({\sim} x)^+.
\]
For any tolerance $T$, $\textit{RS}(T)$ has a polarity defined for $(A,B) \in \textit{RS}(T)$ by 
\[ {\sim} (A,B) = (B^c, A^c).\]

A \emph{De~Morgan algebra}   is a structure $(L, \vee, \wedge, {\sim}, 0, 1)$ such that
$(L, \vee, \wedge, 0, 1)$ is a bounded distributive lattice equipped with a polarity $\sim$. If a De~Morgan algebra satisfies 
the inequality 
\begin{equation} \tag{K}
x \wedge {\sim} x \leq y \vee {\sim} y, 
\end{equation}
it is called a \emph{Kleene algebra}. 
We noted in \cite{JarRad14} that if $T$ is a tolerance induced by an irredundant covering of $U$, 
then $\textit{RS}(T)$ forms a Kleene algebra.

In the literature  \cite{Qian2010,She2012} can be found studies in which approximation operators are defined by using certain combinations
of two equivalence relations, meaning that concepts are described by two ``granulation spaces''.
This work is devoted to the study of lattice-theoretical structures arising from such multigranulation spaces.
In Section~\ref{Sec:multi} we study the basic lattice-theoretical properties of so-called ``optimistic'' and ``pessimistic''
approximations.
The ordered sets of rough sets determined by these approximation pairs are considered in Section~\ref{Sec:RoughSets}.
Particularly, the rough set system determined by optimistic approximations
does not necessarily form a lattice and its Dedekind--MacNeille completion is considered in Section~\ref{Sec:Completion}.
Some concluding remarks end the work.

\section{Multigranular approximations} \label{Sec:multi}

Let $P$ and $Q$ be equivalences on a set $U$. For any $X \subseteq U$, the so-called \emph{optimistic lower approximation} of $X$ 
\cite{Qian2010} is defined as
\[ X_{P + Q} = \{ x \in U \mid P(x) \subseteq X \text{ \ or \ } Q(x) \subseteq X \}. \] 
For instance, if $P$ is interpreted as a knowledge of one expert and $Q$ represents knowledge of a second one, then
$X_{P + Q}$ can be seen as a lower approximation of $X$ such that an element belongs to $X_{P + Q}$ if and only if  it is certainly in $X$ 
by the knowledge of at least one of the experts. The \emph{optimistic upper approximation} of $X$ is defined as the dual of
$X \mapsto X_{P + Q}$ by setting 
\[ X^{P + Q} = \big ((X^c)_{P + Q} \big )^c . \]

It can be easily seen that 
\[ X^{P + Q} = \{ x \in U \mid P(x) \cap X \neq \emptyset \text{ \ and \ } Q(x) \cap X \neq \emptyset \}. \]
The operators can be also written in the form
\[ X_{P + Q} = X_{P} \cup X_{Q} \text{ \quad and \quad } X^{P + Q} = X^{P} \cap X^{Q}. \]

The following properties are given in \cite{Qian2010}: 
\[ 
X \subseteq  X^{P + Q}, \qquad \left (X^{P + Q} \right )^{P + Q} =  X^{P + Q}, \qquad X \subseteq Y \Longrightarrow X^{P + Q} \subseteq Y^{P + Q}.
\]
This means that $X \mapsto X^{P + Q}$ is a closure operator. The corresponding closure system is
\[ \wp(U)^{P + Q} = \{ X^{P + Q} \mid X \subseteq U \} =  \{A \subseteq U \mid A^{P + Q} = A\}.\]
The ordered set $(\wp(U)^{P + Q}, \subseteq)$ is a complete lattice in which
\[ 
\bigwedge \mathcal{H} = \bigcap \mathcal{H} \qquad \text{and} \qquad \bigvee \mathcal{H} =  \big(\bigcup \mathcal{H}\big)^{P + Q}.
\]
Similarly, the map $X \mapsto X_{P + Q}$ is an \emph{interior operator}, that is, 
\[ 
X_{P + Q} \subseteq  X, \qquad \left (X_{P + Q} \right )_{P + Q} =  X_{P + Q}, \qquad X \subseteq Y \Longrightarrow X_{P + Q} \subseteq Y_{P + Q}.
\]
The family 
\[ \wp(U)_{P + Q} = \{ X_{P + Q} \mid X \subseteq U \} =  \{A \subseteq U \mid A_{P + Q} = A\} \]
is an \emph{interior system}.  The ordered set $(\wp(U)_{P + Q}, \subseteq)$ is a complete lattice such that
\[ 
\bigvee \mathcal{H} = \bigcup \mathcal{H} \qquad \text{and} \qquad \bigwedge \mathcal{H} =  \big(\bigcap \mathcal{H}\big)_{P + Q}.
\]

\begin{lemma} The complete lattices $\wp(U)^{P + Q}$ and $\wp(U)_{P + Q}$ are dually isomorphic, that is, 
\[ (\wp(U)^{P + Q},\subseteq) \cong (\wp(U)_{P + Q},\supseteq). \]
\end{lemma}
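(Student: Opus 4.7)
The plan is to show that the complementation map $\varphi\colon A \mapsto A^c$ is the desired dual isomorphism between $\wp(U)^{P+Q}$ and $\wp(U)_{P+Q}$. Since an order-reversing bijection between two complete lattices is automatically a complete dual lattice isomorphism, it will suffice to check that $\varphi$ maps one set bijectively onto the other and reverses inclusion.

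First I would unpack the definition $X^{P + Q} = ((X^c)_{P + Q})^c$ to extract the two dual identities
\[
(X^{P + Q})^c = (X^c)_{P + Q} \qquad \text{and} \qquad (X_{P + Q})^c = (X^c)^{P + Q},
\]
the second one being obtained by substituting $X^c$ for $X$ in the first and complementing. These two identities are the only non-trivial ingredient of the proof; everything else is formal.

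Next I would verify that $\varphi$ is well-defined, i.e.\ that $A^c \in \wp(U)_{P + Q}$ for every $A \in \wp(U)^{P + Q}$. If $A = A^{P + Q}$, then by the first identity $A^c = (A^{P + Q})^c = (A^c)_{P + Q}$, so $A^c$ is fixed by the lower approximation and thus belongs to $\wp(U)_{P + Q}$. Symmetrically, using the second identity, complementation sends every $B \in \wp(U)_{P + Q}$ into $\wp(U)^{P + Q}$, which gives a two-sided inverse and hence a bijection. Finally, for $A_1, A_2 \in \wp(U)^{P+Q}$ the equivalence $A_1 \subseteq A_2 \iff A_2^c \subseteq A_1^c$ shows that $\varphi$ is order-reversing, completing the identification of $(\wp(U)^{P+Q},\subseteq)$ with $(\wp(U)_{P+Q},\supseteq)$.

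There is no serious obstacle here; the only subtle point is recognising that the ``optimistic'' operators are defined exactly so as to be De~Morgan duals of each other, so the result really is just the lattice-theoretic reflection of that defining symmetry. If desired, one can additionally record the induced join/meet conversions $\varphi(\bigcap \mathcal{H}) = \bigcup\{A^c \mid A \in \mathcal{H}\}$ and $\varphi((\bigcup \mathcal{H})^{P+Q}) = (\bigcap\{A^c \mid A \in \mathcal{H}\})_{P+Q}$, but these follow automatically from $\varphi$ being an order anti-isomorphism of complete lattices.
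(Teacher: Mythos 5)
Your proposal is correct and follows essentially the same route as the paper: both use the complementation map $\varphi\colon A \mapsto A^c$ together with the De~Morgan identities $(X^{P+Q})^c = (X^c)_{P+Q}$ and $(X_{P+Q})^c = (X^c)^{P+Q}$ to show $\varphi$ is a well-defined, order-reversing bijection. The only cosmetic difference is that you exhibit a two-sided inverse while the paper argues via order-embedding plus surjectivity.
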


\begin{proof} We show that the map $\varphi \colon  X \mapsto X^c$ is an order-reversing isomorphism from $\wp(U)^{P + Q}$ to $\wp(U)_{P + Q}$.
First we note that $\varphi$ is well-defined. If $X \in \wp(U)^{P + Q}$, then
\[ \varphi(X) = \varphi( X^{P + Q} ) = ( X^{P + Q} )^c = (X^c)_{P + Q}, \]
which clearly belongs to $\wp(U)_{P + Q}$. Let $X,Y \in  \wp(U)^{P + Q}$. 
Now $X \subseteq Y$ is equivalent to $X^c \supseteq Y^c$, which means that $X \subseteq Y$ if and only if
$\varphi(X) \supseteq \varphi(Y)$. Therefore, $\varphi$ is an order-embedding. Note that an order-embedding
is always an injection. Finally, we show that $\varphi$ is a surjection. Suppose $Y \in  \wp(U)_{P + Q}$.
Then, $Y = Y_{P + Q}$ and $Y^c = (Y_{P + Q})^c = (Y^c)^{P + Q}$ belongs to $\wp(U)^{P + Q}$ and 
$\varphi(Y^c) = Y^{cc} = Y$, which completes the proof.
\end{proof}

Some lattice-theoretical properties of $\wp(U)^{P + Q}$ and $\wp(U)_{P + Q}$ are considered in \cite{She2012}. 
For example, these lattices are not generally distributive.

\begin{definition}
We say that the equivalences $P$ and $Q$ on $U$ are \emph{coherent} if
\[  (\forall x \in U) \, P(x) \subseteq Q(x) \quad \text{or} \quad Q(x) \subseteq P(x). \]
\end{definition}

The following proposition lists some characteristic properties of coherent equivalences.

\begin{proposition} \label{Prop:CoherenceEquivalent}
Let $P$ and $Q$ be equivalences on $U$. The following are equivalent:
\begin{enumerate}[\rm (a)]
\item $P$ and $Q$ are coherent;
\item $P \cup Q$ is an equivalence;
\item $P \cup Q = P \circ Q$.
\end{enumerate}
\end{proposition}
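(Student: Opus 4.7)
The plan is to close the cycle $(a) \Rightarrow (b) \Rightarrow (c) \Rightarrow (a)$, since each implication is short and the three conditions naturally link via the composition $P \circ Q$.

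For $(a) \Rightarrow (b)$, the relation $P \cup Q$ is automatically reflexive and symmetric, so only transitivity needs checking. Given $x(P \cup Q)y$ and $y(P \cup Q)z$, the only nontrivial cases are the ``mixed'' ones, say $xPy$ and $yQz$. Here I would apply coherence at the pivot element $y$: if $P(y) \subseteq Q(y)$, then $x \in P(y) \subseteq Q(y)$ gives $xQy$, hence $xQz$ by transitivity of $Q$; the symmetric sub-case is handled identically. The other mixed case $xQy, yPz$ follows by the same argument.

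For $(b) \Rightarrow (c)$, I would establish both inclusions directly. The inclusion $P \cup Q \subseteq P \circ Q$ uses reflexivity: if $xPy$, write $xPy$ and $yQy$ to get $x(P\circ Q)y$, and symmetrically for $xQy$. Conversely, if $x(P \circ Q)y$ via some witness $z$, then $x(P\cup Q)z$ and $z(P \cup Q)y$, so transitivity of $P\cup Q$ (granted by (b)) yields $x(P \cup Q)y$.

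For $(c) \Rightarrow (a)$, the natural approach is by contradiction: suppose some $x$ has elements $y \in P(x) \setminus Q(x)$ and $z \in Q(x) \setminus P(x)$. Then $yPx$ and $xQz$ give $y(P \circ Q)z$, so by (c), $y(P \cup Q)z$, i.e.\ either $yPz$ or $yQz$. In the first case $xPy$ and $yPz$ force $xPz$, contradicting $z \notin P(x)$; in the second case $yQz$ and $zQx$ force $yQx$, contradicting $y \notin Q(x)$.

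The only step requiring any ingenuity is $(c) \Rightarrow (a)$, where one must isolate a failure of coherence as two specific witnesses in the set-differences $P(x) \setminus Q(x)$ and $Q(x) \setminus P(x)$ and then play the composition off against the stated equality; the other two implications are routine verifications using reflexivity and transitivity of $P$ and $Q$.
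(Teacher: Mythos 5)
Your proof is correct and follows essentially the same route as the paper: the cycle $(a)\Rightarrow(b)\Rightarrow(c)\Rightarrow(a)$, with coherence applied at the pivot element for transitivity, reflexivity and transitivity for the two inclusions in $(b)\Rightarrow(c)$, and the same two-witness contradiction for $(c)\Rightarrow(a)$.
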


\begin{proof} (a)$\Rightarrow$(b): Suppose $P$ and $Q$ are coherent and let $(a,b)$ and $(b,c)$
belong to $P \cup Q$. If $(a,b)$ and $(b,c)$ belong to either $P$ or $Q$, there is nothing to prove. Thus,
assume $(a,b) \in P$ and $(b,c) \in Q$. Because $P$ and $Q$ are coherent, $P(b) \subseteq Q(b)$ or 
$Q(b) \subseteq P(b)$. If  $P(b) \subseteq Q(b)$, then $a \in P(b) \subseteq Q(b)$, gives 
$(a,c) \in Q \subseteq P \cup Q$. Analogously, $Q(b) \subseteq P(b)$ implies 
$c \in Q(b) \subseteq P(b)$ and $(a,c) \in P \subseteq P \cup Q$. The case  $(a,b) \in Q$ and $(b,c) \in P$
can be considered similarly.

(b)$\Rightarrow$(c): Suppose $(a,b) \in P \cup Q$. Then  $(a,b) \in P$ or  $(a,b) \in Q$. Because 
$P$ and $Q$ are reflexive, $P,Q \subseteq P \circ Q$ and $(a,b) \in P \circ Q$. Hence, $P \cup Q \subseteq P \circ Q$.
If $(a,b) \in P \circ Q$, then there is $c \in U$ such that $(a,c) \in P$ and $(c,b) \in Q$. This means
$(a,c),(c,b) \in P \cup Q$. Because $P \cup Q$ is transitive, we have $(a,b) \in P \cup Q$.
Thus, also $P \circ Q \subseteq P \cup Q$ holds.

(c)$\Rightarrow$(a): Suppose (c) holds, but $P$ and $Q$ are not coherent. Then there is $x \in U$ such that
$P(x) \nsubseteq Q(x)$ and $Q(x) \nsubseteq P(x)$. This means that there is $a \in P(x)$ such that $a \notin Q(x)$
and $b \in Q(x)$ such that $b \notin P(x)$. We have $(a,x) \in P$ and $(x,b) \in Q$. This yields
$(a,b) \in P \circ Q = P \cup Q$. Therefore, $(a,b) \in P$ or $(a,b) \in Q$. If $(a,b) \in P$, 
then $(a,x) \in P$ gives $b \in P(x)$, a contradiction. If
$(a,b) \in Q$, then $(b,x) \in Q$ gives $a \in Q(x)$, a contradiction again. Thus,  $P$ and $Q$ must be coherent.
\end{proof}

\begin{remark}
The notion of coherence originates in \cite{Schreider75}. There Ju.~A.~Schreider defined that two equivalences $P$
and $Q$ on $U$ are ``coherent'' if there are two disjoint subsets $U_1, U_2 \subseteq U$ (one of which can
be empty), relations $P_1, Q_1$ on $U_1$ and relations $P_2,Q_2$ on $U_2$, which satisfy
\[
U = U_1 \cup U_2, \quad P = P_1 \cup P_2, \quad Q = Q_1 \cup Q_2, \quad P_1 \subseteq Q_1, \quad Q_2 \subseteq P_2.
\]
He proved \cite[Theorem 2.5]{Schreider75} that this condition is equivalent to the fact that the union $P \cup Q$ is an equivalence. 
This means that Schreider's definition coincides with our definition of $P$ and $Q$ being coherent.
\end{remark}

\begin{example} \label{Exa:Coherent}
Let $P$ and $Q$ be coherent. We can divide $U$ into two parts 
\[ U_1 = \{ x \in U \mid Q(x) \subseteq P(x) \} \quad \text{ and } \quad 
U_2 = \{ x \in U \mid P(x) \subset Q(x) \} .\]
So, if $P(x) = Q(x)$, then $x$ belongs $U_1$. Note that $(P \cup Q)(x) = P(x) \cup Q(x)$ for all $x \in U$. 
We have that $P \cup Q$ is an equivalence such that
\[ (P \cup Q)(x) = \left \{ 
\begin{array}{ll}
P(x) & \text{if $x \in U_1$},\\
Q(x) & \text{if $x \in U_2$}. 
\end{array} \right .
\]
Suppose $U = \{a,b,c,d,e\}$. Let $P$ and $Q$ be equivalences such that
\[ U/P = \{ \{a,e\}, \{b\},\{c\}, \{d\} \}  \quad \text{ and } \quad  U/Q = \{ \{a\}, \{b,c\}, \{d\}, \{e\} \}.\] 
Now $U_1 = \{a,d,e\}$, $U_2 = \{b,c\}$, and $U / (P \cup Q) = \{\{a,e\}, \{b,c\},\{d\}\}$.
\end{example}

\begin{proposition} \label{Prop:CohenrentIntersection}
If $P$ and $Q$ are coherent equivalences on $U$, then for all $X \subseteq U$,
\[ X^{P + Q} = X^{P \cap Q} \qquad \text{and} \qquad X_{P + Q} = X_{P \cap Q} . \]
\end{proposition}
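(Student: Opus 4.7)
The plan is to work pointwise, exploiting the fact that coherence forces one of $P(x),Q(x)$ to be contained in the other, so that the intersection $(P\cap Q)(x)=P(x)\cap Q(x)$ collapses to whichever of the two neighbourhoods is smaller.

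First I would record the elementary identity $(P\cap Q)(x)=P(x)\cap Q(x)$, which holds for all $x\in U$ without any coherence hypothesis. Then I would fix $x\in U$ and, using coherence, split into the two cases $P(x)\subseteq Q(x)$ and $Q(x)\subseteq P(x)$. In either case $(P\cap Q)(x)$ equals the smaller neighbourhood (say $(P\cap Q)(x)=P(x)$ when $P(x)\subseteq Q(x)$).

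For the equality $X_{P+Q}=X_{P\cap Q}$, I would use the formulation $X_{P+Q}=\{x\mid P(x)\subseteq X\text{ or }Q(x)\subseteq X\}$ and observe that in the case $P(x)\subseteq Q(x)$ the disjunction $P(x)\subseteq X$ or $Q(x)\subseteq X$ is equivalent to the single condition $P(x)\subseteq X$ (since $Q(x)\subseteq X$ trivially implies $P(x)\subseteq X$, and the converse is obvious). Because $(P\cap Q)(x)=P(x)$ in this case, this condition is exactly $(P\cap Q)(x)\subseteq X$, i.e.\ $x\in X_{P\cap Q}$. The symmetric case $Q(x)\subseteq P(x)$ is handled identically. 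For the equality $X^{P+Q}=X^{P\cap Q}$, I would use the formulation $X^{P+Q}=\{x\mid P(x)\cap X\neq\emptyset\text{ and }Q(x)\cap X\neq\emptyset\}$; under $P(x)\subseteq Q(x)$ this conjunction collapses to the single statement $P(x)\cap X\neq\emptyset$ (since the nonemptiness of the smaller intersection automatically gives the larger one), and again this is exactly $(P\cap Q)(x)\cap X\neq\emptyset$.

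There is no real obstacle here: the only substantive step is the observation that coherence makes the disjunction (resp.\ conjunction) defining the optimistic lower (resp.\ upper) approximation redundant, reducing it to a condition on the smaller of $P(x),Q(x)$, which is precisely $(P\cap Q)(x)$. It is worth noting that one inclusion in each identity, namely $X_{P+Q}\subseteq X_{P\cap Q}$ and $X^{P\cap Q}\subseteq X^{P+Q}$, holds for arbitrary equivalences (because $(P\cap Q)(x)\subseteq P(x),Q(x)$ for every $x$), so coherence is really only needed for the reverse inclusions. The proof could therefore be organised either as a direct pointwise equivalence or as a proof of the two nontrivial inclusions; I would favour the pointwise approach since it handles both formulas uniformly in a few lines.
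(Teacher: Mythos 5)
Your proof is correct and rests on the same key observation as the paper's, namely that coherence forces $(P\cap Q)(x)$ to coincide with one of $P(x)$, $Q(x)$, so the conjunction (resp.\ disjunction) defining $X^{P+Q}$ (resp.\ $X_{P+Q}$) collapses to a condition on $(P\cap Q)(x)$. The only cosmetic difference is that the paper proves the upper-approximation identity this way and then obtains the lower one by the duality $X_{P+Q}=\bigl((X^c)^{P+Q}\bigr)^c$, whereas you argue both identities pointwise.
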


\begin{proof}
Because $P \cap Q$ is included in $P$ and $Q$, $X^{P \cap Q}$ is included in $X^P$ and $X^Q$, which 
implies $X^{P \cap Q} \subseteq X^P \cap X^Q = X^{P + Q}$. On the other hand, if $x \in X^{P + Q}$,
then $P(x) \cap X \neq \emptyset$ and $Q(x) \cap X \neq \emptyset$. Because $(P \cap Q)(x) = P(x) \cap Q(x)$,
and by coherency, $P(x) \subseteq Q(x)$ or $Q(x) \subseteq P(x)$, we have that  $(P \cap Q)(x) = P(x)$
or $(P \cap Q)(x) = Q(x)$. This implies $(P \cap Q)(x) \cap X \neq \emptyset$ and $x \in X^{P \cap Q}$.

The other claim follows from the duality of the operator pairs:
\[ X_{P + Q} = \big ( (X^c)^{P + Q} \big )^c =  \big ( (X^c)^{P \cap Q} \big )^c = X_{P \cap Q} . \]
\end{proof}

Because $P \cap Q$ is an equivalence and the properties of approximations determined by equivalences are well known, we have that if 
$P$ and $Q$ are coherent, then 
\[ \mathrm{Def}(P \cap Q) = {\rm Sat}(P \cap Q) = \wp(U)^{P + Q} = \wp(U)_{P + Q} \]
and this family of sets forms a complete atomistic Boolean lattice.

\begin{proposition} \label{Prop:ApproximationsDistributive}
If $\wp(U)^{P + Q}$ is distributive, then $P$ and $Q$ are coherent.
\end{proposition}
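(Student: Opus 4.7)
The plan is to prove the contrapositive: assuming $P$ and $Q$ are not coherent, I want to exhibit three elements of $\wp(U)^{P+Q}$ that violate the distributive law. Non-coherence hands me some $x \in U$ with $P(x) \not\subseteq Q(x)$ and $Q(x) \not\subseteq P(x)$, so there exist witnesses $a \in P(x) \setminus Q(x)$ and $b \in Q(x) \setminus P(x)$. These are the natural starting data.

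The three candidate elements I would test are the closures of the three singletons:
\[ A = \{x\}^{P+Q}, \qquad B = \{a\}^{P+Q}, \qquad C = \{b\}^{P+Q}, \]
all of which lie in $\wp(U)^{P+Q}$ by construction. Using $X^{P+Q} = X^P \cap X^Q$ together with $\{y\}^P = P(y)$ and $\{y\}^Q = Q(y)$, these simplify to $A = P(x) \cap Q(x)$, $B = P(x) \cap Q(a)$ (because $P(a)=P(x)$), and $C = P(b) \cap Q(x)$ (because $Q(b)=Q(x)$). Now $a \notin Q(x)$ forces $Q(a) \cap Q(x) = \emptyset$, so $A \cap B = \emptyset$; symmetrically $b \notin P(x)$ gives $P(b) \cap P(x) = \emptyset$ and hence $A \cap C = \emptyset$. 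Since meets in $\wp(U)^{P+Q}$ are intersections, this yields $(A \wedge B) \vee (A \wedge C) = \emptyset$.

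On the other hand, $x \in A$ trivially, and I claim that $x \in B \vee C = (B \cup C)^{P+Q}$: the elements $a \in B$ and $b \in C$ show that $P(x) \cap (B \cup C) \ni a$ and $Q(x) \cap (B \cup C) \ni b$ are both nonempty, placing $x$ in the optimistic upper approximation of $B \cup C$. Therefore $x \in A \wedge (B \vee C)$, contradicting distributivity. The only genuinely creative step is choosing these three witnesses; once $A, B, C$ are in hand, the verification is a routine unpacking of definitions using only the fact that two distinct equivalence classes of $P$ (or of $Q$) are disjoint.
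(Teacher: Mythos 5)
Your proof is correct and follows essentially the same route as the paper: both arguments take the closures of the singletons $\{x\}$, $\{a\}$, $\{b\}$ as the witnesses violating distributivity, show the two meets $A\wedge B$ and $A\wedge C$ are empty via disjointness of distinct equivalence classes, and show $A\wedge(B\vee C)\neq\emptyset$. Your verification of the last step (exhibiting $x$ directly in $(B\cup C)^{P+Q}$) is a slightly more economical computation than the paper's, but the idea is the same.
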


\begin{proof}
Assume that  $P$ and $Q$ are not coherent. Then there is $x \in U$ such that $P(x) \nsubseteq Q(x)$ and $Q(x) \nsubseteq P(x)$.
This means that there is $a \in P(x) \setminus Q(x)$ and $b \in Q(x) \setminus P(x)$.

Obviously,
\[
\{x\}^{P + Q} = \{x\}^{P} \cap \{x\}^{Q} = P(x) \cap Q(x) = (P \cap Q)(x). 
\]
In a similar way, we can see $\{a\}^{P + Q} = (P \cap Q)(a)$ and $\{b\}^{P + Q} = (P \cap Q)(b)$. Now in $\wp(U)^{P + Q}$, 
\begin{align*}
& \{x\}^{P + Q} \wedge \big (\{a\}^{P + Q} \vee \{b\}^{P + Q} \big ) = (P \cap Q)(x) \wedge \big ( (P \cap Q)(a) \vee (P \cap Q)(b) \big ) = \\
&  (P \cap Q)(x) \cap \big ( (P \cap Q)(a) \cup (P \cap Q)(b) \big )^{P + Q} = \\
& (P \cap Q)(x) \cap \big ((P \cap Q)(a) \cup (P \cap Q)(b) \big )^{P} \cap \big ((P \cap Q)(a) \cup (P \cap Q)(b) \big)^{Q} = \\
& (P \cap Q)(x) \cap \big ((P \cap Q)(a)^{P} \cup (P \cap Q)(b)^{P} \big) \cap \big ((P \cap Q)(a)^{Q} \cup (P \cap Q)(b)^{Q} \big ).
\end{align*}
Note that $(P \cap Q)(a)^{P} = P(a)$. This is because $(P \cap Q)(a)^{P} \subseteq P(a)^P = P(a)$ and $a \in (P \cap Q)(a)$
implies $P(a) = \{a\}^P \subseteq (P \cap Q)(a)^P$. In a similar manner,
\[
(P \cap Q)(b)^{P} = P(b), \quad (P \cap Q)(a)^{Q} = Q(a), \quad (P \cap Q)(b)^{Q} = Q(b). \] 
Because $P(a) = P(x)$ and $Q(b) = Q(x)$, we have
\begin{align*}
 \{x\}^{P + Q} \wedge (\{a\}^{P + Q} \vee \{b\}^{P + Q}) &= (P \cap Q)(x) \cap ((P(a) \cup P(b)) \cap  (Q(a) \cup Q(b)) \\
 & \supseteq (P \cap Q)(x) \cap (P(a) \cap Q(b)) \\
 &=  (P(x) \cap Q(x)) \cap (P(x) \cap Q(x)) \\
 &=  P(x) \cap Q(x) \neq \emptyset.
\end{align*}
On the other hand, 
\[ (\{x\}^{P + Q} \wedge \{a\}^{P + Q}) \vee (\{x\}^{P + Q} \wedge \{b\}^{P + Q}) = \]
\[ ( (P \cap Q)(x) \cap (P \cap Q)(a) ) \vee ( (P \cap Q)(x) \cap (P \cap Q)(b) ) \]
Because  $a \in P(x) \setminus Q(x)$,  $P(a) \cap Q(x) = \emptyset$. This implies that
\[ (P \cap Q)(x) \cap (P \cap Q)(a) = P(x) \cap Q(x) \cap P(a) \cap Q(a) = \emptyset.\] 
Similarly, $b \in Q(x) \setminus P(x)$ means that $Q(b) \cap P(x)  = \emptyset$, and we have  $(P \cap Q)(x) \cap (P \cap Q)(b) = \emptyset$.
Therefore, 
\[ (\{x\}^{P + Q} \wedge \{a\}^{P + Q}) \vee (\{x\}^{P + Q} \wedge \{b\}^{P + Q}) = \emptyset \vee \emptyset = \emptyset . \]
We have now shown that if $P$ and $Q$ are not coherent, then $\wp(U)^{P + Q}$ cannot be distributive. This means that if
$\wp(U)^{P + Q}$ is distributive, then $P$ and $Q$ are coherent.
\end{proof}

We can now write the following corollary.

\begin{corollary}
Let $P$ and $Q$ be equivalences on $U$. The following are equivalent.
\begin{enumerate}[\rm (a)]
\item $\wp(U)_{P + Q}$ and $\wp(U)^{P + Q}$ are distributive;
\item $P$ and $Q$ are coherent;
\item $\wp(U)_{P + Q}$ and $\wp(U)^{P + Q}$ are completely distributive;
\item $\wp(U)_{P + Q}$ and $\wp(U)^{P + Q}$ are atomistic Boolean lattices.
\end{enumerate}
\end{corollary}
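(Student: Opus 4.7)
The plan is to close the cycle (a)$\Rightarrow$(b)$\Rightarrow$(d)$\Rightarrow$(c)$\Rightarrow$(a), since almost every step is already available from results stated earlier in the excerpt.

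The implication (a)$\Rightarrow$(b) is immediate from Proposition~\ref{Prop:ApproximationsDistributive}: distributivity of $\wp(U)^{P+Q}$ alone forces coherence, so a fortiori distributivity of both lattices does. For (b)$\Rightarrow$(d), I would invoke Proposition~\ref{Prop:CohenrentIntersection} together with the remark following it: once $P$ and $Q$ are coherent, both $\wp(U)^{P+Q}$ and $\wp(U)_{P+Q}$ coincide with $\mathrm{Sat}(P\cap Q) = \mathrm{Def}(P\cap Q)$, which is a complete atomistic Boolean lattice because $P\cap Q$ is an equivalence.

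For (d)$\Rightarrow$(c), I would quote the general fact recalled in Section~\ref{Sec:IntroPrel} that a complete Boolean lattice is atomistic if and only if it is completely distributive. Finally, (c)$\Rightarrow$(a) is trivial, since complete distributivity implies distributivity. Thus all four conditions are equivalent.

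No step here is genuinely delicate; the only ``real'' content is Proposition~\ref{Prop:ApproximationsDistributive} (already proved) and the identification $\wp(U)^{P+Q}=\wp(U)_{P+Q}=\mathrm{Sat}(P\cap Q)$ under coherence (already noted). The corollary is essentially a packaging result, so I expect no obstacle beyond citing the right ingredient at each arrow.
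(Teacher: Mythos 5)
Your proposal is correct and uses the same essential ingredients as the paper: Proposition~\ref{Prop:ApproximationsDistributive} for (a)$\Rightarrow$(b) and Proposition~\ref{Prop:CohenrentIntersection} (with the identification $\wp(U)^{P+Q}=\wp(U)_{P+Q}=\mathrm{Sat}(P\cap Q)$) for the coherent case. The only cosmetic difference is that you close a single cycle through (d)$\Rightarrow$(c) using the recalled fact that a complete Boolean lattice is atomistic iff completely distributive, whereas the paper derives (c) and (d) both directly from (b) and notes that each trivially implies (a); both arrangements are equally valid.
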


\begin{proof}
If $\wp(U)_{P + Q}$ and $\wp(U)^{P + Q}$ are distributive, then $P$ and $Q$ are coherent
by Proposition~\ref{Prop:ApproximationsDistributive}.
Thus (a) implies (b). On the other hand, if $P$ and $Q$ are coherent, then, by
Proposition~\ref{Prop:CohenrentIntersection}, $X^{P + Q} = X^{P \cap Q}$ and $X_{P + Q} = X_{P \cap Q}$.
Because $P \cap Q$ is an equivalence, $X^{P \cap Q}$ and $X_{P \cap Q}$ are completely distributive complete lattices
and atomistic Boolean lattices. Therefore, (b) implies both (c) and (d). On the other hand,
trivially (c) implies (a), and the same holds for (d). Thus, (a)--(d) are equivalent.
\end{proof}

Let $X \subseteq U$. In \cite{She2012}, the authors introduced the \emph{pessimistic lower approximation} of $X$ by  
\begin{equation} \label{Eq:PessimisticOne}
\{ x \in U \mid P(x) \subseteq X  \text{ \ and \ }  Q(x) \subseteq X \}. 
\end{equation}
It it obvious that \eqref{Eq:PessimisticOne} equals $X_P \cap X_Q$. We can now write the following lemma.
\begin{lemma}
For all $X \subseteq U$,
\[ X_{P \cup Q} = X_P \cap X_Q. \]
\end{lemma}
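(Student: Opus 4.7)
The plan is to unfold both sides directly from the definition of the lower approximation and use the elementary fact that a union is contained in a set precisely when each component is. There is no real obstacle here; the proof amounts to a chain of equivalences on the membership condition. I would not even require $P$ and $Q$ to be equivalences for this step, only that they are binary relations; the statement is purely set-theoretic.

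First I would record the identity on neighbourhoods: for any $x \in U$,
\[
(P \cup Q)(x) = \{ y \in U \mid x \, (P \cup Q) \, y \} = \{ y \in U \mid x \, P \, y \text{ or } x \, Q \, y\} = P(x) \cup Q(x).
\]
This is immediate from the definition of the union of relations and the definition of the $T$-neighbourhood.

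Next, plugging this into the defining formula for $X_{P \cup Q}$, I would write
\[
X_{P \cup Q} = \{ x \in U \mid (P \cup Q)(x) \subseteq X\} = \{x \in U \mid P(x) \cup Q(x) \subseteq X\}.
\]
Then I would use the elementary equivalence $A \cup B \subseteq X \iff A \subseteq X \text{ and } B \subseteq X$ to rewrite the condition as $P(x) \subseteq X$ and $Q(x) \subseteq X$. Splitting the resulting set according to the conjunction yields
\[
X_{P \cup Q} = \{x \in U \mid P(x) \subseteq X\} \cap \{x \in U \mid Q(x) \subseteq X\} = X_P \cap X_Q,
\]
which finishes the proof. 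No coherence hypothesis and no use of transitivity or symmetry is needed; I would remark on this in passing, since it stands in contrast to the dual identity $X^{P \cup Q} = X^P \cup X^Q$ which likewise holds by the same sort of argument.
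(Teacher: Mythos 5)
Your proof is correct and is essentially the paper's own argument: both rest on the identity $(P\cup Q)(x)=P(x)\cup Q(x)$ together with the equivalence $P(x)\cup Q(x)\subseteq X \iff P(x)\subseteq X \text{ and } Q(x)\subseteq X$. The only cosmetic difference is that you run a single chain of membership equivalences where the paper proves the two inclusions separately (getting $\subseteq$ from antitonicity of the lower approximation in the relation), and your side remarks --- that no coherence, symmetry or transitivity is needed, and that the dual identity $X^{P\cup Q}=X^P\cup X^Q$ holds the same way --- match what the paper observes immediately after the lemma.
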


\begin{proof}
Because $P$ and $Q$ are included in $P \cup Q$, we have that $X_{P \cup Q}$ is included in $X_P$ and $X_Q$, which 
gives that $X_{P \cup Q} \subseteq X_P \cap X_Q$. Conversely, if $x \in X_P \cap X_Q$, then
$P(x)$ and $Q(x)$ are included in $X$. Therefore, $(P \cup Q)(x) = P(x) \cup Q(x) \subseteq X$, and $x \in X_{P \cup Q}$.
\end{proof}

Thus, the ``pessimistic'' lower approximation of $X$ coincides with $X_{P \cup Q}$. In \cite{She2012},
the \emph{pessimistic upper approximation} is defined as the dual of the ``pessimistic'' lower approximation.
We have that the ``pessimistic'' upper approximations of $X$ is $X^{P \cup Q}$. It is also easy to see that
$X^{P \cup Q} = X^{P} \cup X^{Q}$. The different approximations of any $X \subseteq U$ can now be ordered as
\[ X_{P \cup Q} \subseteq X_P, X_Q \subseteq X_{P + Q} \subseteq X \subseteq X^{P + Q} \subseteq X^P,X^Q \subseteq X^{P \cup Q}. \]
The relation $P \cup Q$ is generally a tolerance and it is an equivalence if and only if $P$ and $Q$ are coherent. 
Because $P \cup Q$ is a tolerance, we can write the following proposition.

\begin{proposition} \label{Prop:Ortholattice} Let $P$ and $Q$ be equivalences on $U$.
\begin{enumerate}[\rm (a)]
\item The map $X \mapsto (X^c)^{P \cup Q}$ is an orthocomplementation in $\wp(U)^{P \cup Q}$.
\item The map $X \mapsto (X^c)_{P \cup Q}$ is an orthocomplementation in $\wp(U)_{P \cup Q}$.
\end{enumerate}
\end{proposition}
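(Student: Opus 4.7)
The approach is to reduce both statements to the general fact recalled earlier in the introduction from \cite{JarRad14}: for any tolerance $T$ on $U$, the lattices $\wp(U)^T$ and $\wp(U)_T$ are ortholattices with orthocomplementations $A \mapsto (A^c)^T$ and $B \mapsto (B^c)_T$, respectively. The only new ingredient needed is that $P \cup Q$ is a tolerance, which is immediate: the union of two reflexive and symmetric relations is reflexive and symmetric. Parts (a) and (b) are then just the specialization of the cited fact to $T = P \cup Q$.

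For a self-contained verification I would check (O1)--(O3) directly for part (a), and obtain (b) by the symmetric dual argument (exchanging the roles of $X^T$ and $X_T$ throughout). Order reversal (O1) is immediate from monotonicity (T5) applied to $T = P \cup Q$ together with the fact that complementation reverses inclusion. Involution (O2) is a short computation using (T3) twice:
\[
\bigl(((A^c)^{P \cup Q})^c\bigr)^{P \cup Q} = (A_{P \cup Q})^{P \cup Q} = \Box A = A,
\]
the last equality holding because $A \in \wp(U)^{P \cup Q}$ means precisely $\Box A = A$.

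The only step with any real content is the complement axiom (O3). For the join, $A \vee (A^c)^{P \cup Q} = A \cup (A^c)^{P \cup Q}$ by (T7), and this contains $A \cup A^c = U$ by extensivity (T4), so it equals $U$. For the meet, $A \wedge (A^c)^{P \cup Q} = \Box\bigl(A \cap (A^c)^{P \cup Q}\bigr)$; I would first rewrite $A \cap (A^c)^{P \cup Q} = A \cap (A_{P \cup Q})^c = A \setminus A_{P \cup Q}$ using (T3). Then monotonicity gives $(A \setminus A_{P \cup Q})_{P \cup Q} \subseteq A_{P \cup Q}$ (from inclusion in $A$) and, via (T3) again, $(A \setminus A_{P \cup Q})_{P \cup Q} \subseteq ((A_{P \cup Q})^{P \cup Q})^c$ (from inclusion in $(A_{P \cup Q})^c$). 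Since $A_{P \cup Q} \subseteq (A_{P \cup Q})^{P \cup Q}$ by (T4), the intersection of these two bounds is empty, so $(A \setminus A_{P \cup Q})_{P \cup Q} = \emptyset$ and consequently $\Box(A \cap (A^c)^{P \cup Q}) = \emptyset$, which is the least element of $\wp(U)^{P \cup Q}$. No genuine obstacle arises; the entire argument is a routine unpacking of (T1)--(T7) applied to the tolerance $P \cup Q$.
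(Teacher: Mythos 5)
Your proposal is correct and follows essentially the same route as the paper, which states this proposition without proof precisely because it is the specialization to $T = P \cup Q$ of the fact recalled in Section~\ref{Sec:IntroPrel} from \cite{JarRad14} that $\wp(U)^T$ and $\wp(U)_T$ are ortholattices with orthocomplementations $A \mapsto (A^c)^T$ and $B \mapsto (B^c)_T$ for any tolerance $T$. Your additional direct verification of (O1)--(O3), including the key computation $(A \setminus A_{P\cup Q})_{P\cup Q} = \emptyset$ for the meet in (O3), is also correct.
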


The lattice  $\wp(U)_{P \cup Q}$  is not necessarily distributive and next our aim is to give a sufficient and necessary condition for 
$\wp(U)_{P \cup Q}$ to be completely distributive lattice.

Next we find out what are the elements $x \in U$ such that $(P \cup Q)(x)$ is a  $P \cup Q$-block.
Let us divide $U$ into three disjoint sets:
\begin{align*}
U_1 &= \{ x \in U \mid Q(x) \subseteq P(x) \}; \\
U_2 &= \{ x \in U \mid P(x) \subset Q(x) \}; \\
U_3 &= \{ x \in U \mid Q(x) \nsubseteq P(x) \ \text{ and } \ P(x) \nsubseteq Q(x) \}.
\end{align*}
Note that, as earlier, if $P(x) = Q(x)$, then $x \in U_1$. We have three different cases:
\begin{enumerate}[(i)]
\item $x \in U_1$: \ $(P \cup Q)(x) = P(x) \cup Q(x) = P(x)$ is a $P \cup Q$-block, because it is a $P \cup Q$-preblock.
\item $x \in U_2$: \ $(P \cup Q)(x) = P(x) \cup Q(x) = Q(x)$ is a $P \cup Q$-block.
\item $x \in U_3$: \ There are elements $a \in P(x) \setminus Q(x)$ and $b \in Q(x) \setminus P(x)$. Assume that 
$(P \cup Q)(x)$ is a $P \cup Q$-block. Then $a,b \in (P \cup Q)(x) = P(x) \cup Q(x)$ means $(a,b) \in P \cup Q$.  
If $(a,b) \in P$, then $(x,a) \in P$ yields $b \in P(x)$, a contraction. Similarly, $(a,b) \in Q$
implies $a \in Q(x)$, a contraction again. Therefore, $(P \cup Q)(x)$ cannot be a $P \cup Q$-block.
\end{enumerate}

By applying (T8) we can state that $P \cup Q$ is a tolerance induced by an irredundant covering if and only if 
\[ \mathcal{H}(P + Q) = \{ P(x) \mid x \in U_1\} \cup  \{ Q(x) \mid x \in U_2\} \]
induces $P \cup Q$. This also means that the irredundant covering inducing $P \cup Q$ is ${\mathcal{H}(P + Q)}$.

Since it is obvious that the tolerance induced by $\mathcal{H}(P + Q)$ is included in $P \cup Q$, we have that
$P \cup Q$ is a tolerance induced by an irredundant covering if and only if $P \cup Q \subseteq T_{\mathcal{H}(P + Q)}$.
Note that if $P$ and $Q$ are coherent, then $U_1 \cup U_2 = U$ and $U_3 = \emptyset$. 
Since in this case $P \cup Q$ is an equivalence according to Proposition~\ref{Prop:CoherenceEquivalent},
$\mathcal{H}(P + Q)$ consists of the equivalence classes of $P \cup Q$,
trivially forming an irredundant covering of $U$.

\begin{example} \label{Exa:Different cases}
(a)  Let $P$ and $Q$ be equivalences on $U = \{a,b,c\}$ such that
\[ U/P = \{\{a,b\},\{c\}\}  \quad \text{ and } \quad U/Q = \{\{a\}, \{b,c\}\}.\]
Then $U_1 = \{a\}$, $U_2 = \{c\}$, $U_3 = \{b\}$. Obviously,
\[ \mathcal{H}(P + Q) = \{ P(a), Q(c) \} = \{\{a,b\},\{b,c\}\} \]
induces $P \cup Q$ and hence $P \cup Q$ is a tolerance induced by an irredundant covering.

(b) If $P$ and $Q$ are equivalences on $U = \{a,b,c,d\}$ such that
\[ U/P = \{\{a,b\},\{c,d\}\}  \quad \text{ and } \quad  U/Q = \{\{a,d\}, \{b,c\}\},\]
then  $U_1 = U_2 = \emptyset$ and $U_3 = U$. Now $\mathcal{H}(P + Q) = \emptyset$
does not induce $P \cup Q$ and therefore $P \cup Q$ is not a tolerance induced by an irredundant covering.

(c) If $P \cup Q$ is a tolerance induced by an irredundant covering, then $\mathcal{H}(P + Q)$ is the covering
which induces the tolerance $P \cup Q$. But even if $\mathcal{H}(P + Q)$ is an irredundant covering, it does not necessarily
induce $P \cup Q$. For instance let $U = \{a,b,c,d\}$, $U / P = \{ \{a\}, \{b,c\}, \{d\}\}$ and $U / Q = \{ \{a,b\}, \{c,d\}\}$.
Then $U_1 = \emptyset$ and $U_2 = \{a,d\}$. Clearly,
\[ \mathcal{H}(P + Q) = \{ Q(a), Q(d) \} = \{\{a,b\},\{c,d\}\} \]
is an irredundant covering, but it does not induce $P \cup Q$. For instance, $(b,c) \in P \cup Q$, but $(b,c)$ does not
belong to the tolerance induced by $\mathcal{H}(P + Q)$.
\end{example}

Based on the results presented in Section~\ref{Sec:IntroPrel}, we can write the following corollary.

\begin{corollary}
If $\mathcal{H}(P + Q)$ induces $P \cup Q$, then $\wp(U)_{P \cup Q}$ and $\wp(U)^{P \cup Q}$ are complete
atomistic Boolean lattices.
\end{corollary}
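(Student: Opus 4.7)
The plan is to observe that this corollary is essentially a direct application of results already assembled in Section~\ref{Sec:IntroPrel}, together with the analysis of $\mathcal{H}(P+Q)$ carried out in the paragraph preceding the corollary. So the proof should be short and structural, without any new calculation.

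First I would note that $P \cup Q$ is reflexive and symmetric, hence a tolerance on $U$. Next, I would invoke the equivalence established just before the corollary via (T8): the condition that $\mathcal{H}(P+Q)$ induces $P \cup Q$ is precisely the statement that $P \cup Q$ is a tolerance induced by an irredundant covering of $U$ (with $\mathcal{H}(P+Q)$ being that covering). This is the key translation step — everything else then becomes a citation of the general tolerance theory recalled in the preliminaries.

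At that point I would apply the general fact from Section~\ref{Sec:IntroPrel}, namely that for any tolerance $T$ on $U$ induced by an irredundant covering, the lattices $\wp(U)_T$ and $\wp(U)^T$ are completely distributive and Boolean, and that completely distributive Boolean lattices are atomistic. Specializing to $T = P \cup Q$ gives exactly that $\wp(U)_{P \cup Q}$ and $\wp(U)^{P \cup Q}$ are complete atomistic Boolean lattices, which is the conclusion.

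There is no real obstacle here: the only content is to recognize that the hypothesis on $\mathcal{H}(P+Q)$ is a reformulation of $P \cup Q$ being induced by an irredundant covering, and then to quote the previously established lattice-theoretic consequences for such tolerances. The bookkeeping about blocks done in cases (i)--(iii) on $U_1, U_2, U_3$ above is what makes this reformulation legitimate, so I would simply refer back to it rather than redo it.
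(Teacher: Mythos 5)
Your proposal is correct and matches the paper's intended argument exactly: the paper offers no explicit proof, merely prefacing the corollary with ``Based on the results presented in Section~1,'' and the justification is precisely the chain you describe — the (T8)-based equivalence from the preceding paragraph identifies the hypothesis with $P\cup Q$ being induced by an irredundant covering, after which the general results on $\wp(U)_T$ and $\wp(U)^T$ for such tolerances (complete distributivity, Boolean, hence atomistic) apply with $T=P\cup Q$. Nothing is missing.
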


\begin{example} \label{Exa:ApproximationOrders}
Let $P$ and $Q$ be equivalences on $U = \{a,b,c\}$ of Example~\ref{Exa:Different cases}(a), that is,
\[ U/P = \{\{a,b\}, \{c\}\} \quad \text{ and } \quad  U/Q = \{\{a\}, \{b,c\}\} .\]
In Table~\ref{Table:big} are listed different approximations determined by $P$ and $Q$. 
\begin{table}[ht]
\centering
{\small  
\begin{tabular}{c|cccccccc}
$X$ & $X_P$ & $X_Q$ & $X^P$ & $X^Q$ & $X_{P + Q}$ & $X^{P + Q}$ & $X_{P \cup Q}$  &  $X^{P \cup Q}$ \\ \hline
$\emptyset$ & $\emptyset$ & $\emptyset$ & $\emptyset$ & $\emptyset$ & $\emptyset$ & $\emptyset$ & $\emptyset$ & $\emptyset$ \\
$\{a\}$ & $\emptyset$ & $\{a\}$      & $\{a,b\}$  & $\{a\}$   & $\{a\}$           & $\{a\}$   & $\emptyset$ & $\{a,b\}$\\
$\{b\}$ & $\emptyset$ & $\emptyset$  & $\{a,b\}$  & $\{b,c\}$ & $\emptyset$       & $\{b\}$   & $\emptyset$ & $U$ \\
$\{c\}$ & $\{c\}$  & $\emptyset$     & $\{c\}$    & $\{b,c\}$ & $\{c\}$           & $\{c\}$   & $\emptyset$ & $\{b,c\}$ \\
$\{a,b\}$ & $\{a,b\}$ & $\{a\}$      & $\{a,b\}$  & $U$       & $\{a,b\}$         & $\{a,b\}$ & $\{a\}$  & $U$ \\
$\{a,c\}$ & $\{c\}$ & $\{a\}$        & $U$        & $U$       & $\{a,c\}$         & $U$       & $\emptyset$ & $U$\\
$\{b,c\}$ & $\{c\}$ & $\{b,c\}$      & $U$        & $\{b,c\}$ & $\{b,c\}$         & $\{b,c\}$ & $\{c\}$  & $U$ \\
$U$ & $U$ & $U$ & $U$ & $U$ & $U$ & $U$ & $U$ & $U$ 
\end{tabular}
}
\caption{\label{Table:big} Approximations determined by $P$ and $Q$}
\end{table}

The Boolean lattices
\begin{itemize}
\item $\wp(U)_P = \wp(U)^P = \{ \emptyset, \{a,b\},  \{c\} \}$,
\item $\wp(U)_Q = \wp(U)^Q = \{ \emptyset, \{a\},  \{b,c\} \}$,
\item $\wp(U)_{P \cup Q} = \{ \emptyset, \{a\},  \{c\} \}$, and
\item $\wp(U)^{P \cup Q} = \{ \emptyset, \{a,b\},  \{b,c\} \}$
\end{itemize}
are all isomorphic to the 4-element Boolean lattice $\mathbf{2 \times 2}$. The Hasse diagrams of $\wp(U)_{P + Q}$ and $\wp(U)^{P + Q}$ 
are given in Figure~\ref{Fig:Figure1}.
\begin{figure}[ht]
\centering
\includegraphics[width=90mm]{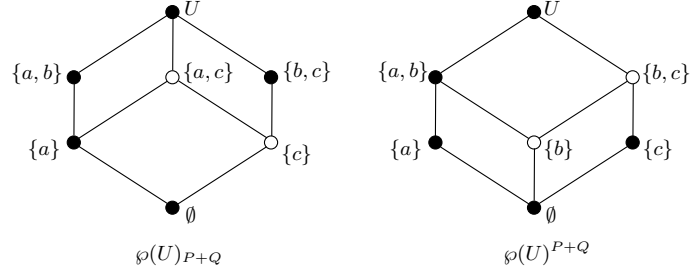}
\caption{The Hasse diagrams of $\wp(U)_{P + Q}$ and $\wp(U)^{P + Q}$. \label{Fig:Figure1}}
\end{figure}

The equivalences $P$ and $Q$ are not coherent, because $P(b) = \{a,b\}$ and $Q(b) = \{b,c\}$ are not $\subseteq$-comparable.
Therefore, the lattices  $\wp(U)_{P + Q}$ and $\wp(U)^{P + Q}$ are not distributive. It is known that
each distributive lattice is modular and that a lattice is modular if and only if it
does not contain the ``pentagon'' $\mathbf{N_5}$ as a sublattice; see e.g. \cite{Grat98} for details. 
The elements of forming  $\mathbf{N_5}$-sublattice are marked with filled circles in Figure~\ref{Fig:Figure1}.
\end{example}

\section{Order structures of multigranular rough sets} \label{Sec:RoughSets}

In this section, we study the lattice-theoretical structure of the set of approximation pairs
\[ \mathit{RS}(P + Q) = \{ (X_{P + Q},X^{P + Q}) \mid X \subseteq U\}, \]
where $P$ and $Q$ are equivalences on $U$.

As noted in \cite{Kong2019}, $\mathit{RS}(P + Q)$ has a polarity defined by
\[   {\sim}(A,B) = ( B^c, A^c)  \]
for $(A,B) \in \mathit{RS}(P + Q)$. Hence, $\mathit{RS}(P + Q)$ is self-dual.

\begin{proposition} \label{Prop:LatticeProperties}
Let $P$ and $Q$ be equivalences on $U$. 
\begin{enumerate}[\rm (i)]
\item If $P$ and $Q$ are coherent, then $\mathit{RS}(P+Q)$ and $\mathit{RS}(P \cup Q)$ are completely
distributive regular double Stone lattices.

\item If the tolerance $P\cup Q$ is induced by the system $\mathcal{H}(P+Q)$ of equivalence classes,
then $RS(P\cup Q)$ is a complete distributive lattice forming a regular pseudocomplemented Kleene algebra.
\end{enumerate}
\end{proposition}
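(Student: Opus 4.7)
The plan is to reduce both parts to results already collected in Section~\ref{Sec:IntroPrel}; the essential observation is that under each hypothesis the multigranular approximation system secretly coincides with an approximation system of a type whose rough set structure is already understood.

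For part (i), the first move is to apply Proposition~\ref{Prop:CohenrentIntersection}: since $P$ and $Q$ are coherent, $X_{P+Q} = X_{P \cap Q}$ and $X^{P+Q} = X^{P \cap Q}$ for every $X \subseteq U$, so as ordered sets $\mathit{RS}(P+Q)$ coincides with $\mathit{RS}(P \cap Q)$. Because $P \cap Q$ is an equivalence, the Pomyka\l{}a--Pomyka\l{}a and Comer theorems quoted in Section~\ref{Sec:IntroPrel} (formulas \eqref{Eq:RS(E)_meet} and \eqref{Eq:RS(E)_join} together with the explicit formulas for $(A,B)^*$ and $(A,B)^+$) show that $\mathit{RS}(P \cap Q)$ is a regular double Stone lattice. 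Complete distributivity is then inherited from the general tolerance criterion \cite[Theorem~4.8]{JarRad14}, since the equivalence classes of $P \cap Q$ trivially form an irredundant covering of $U$. For the $\mathit{RS}(P \cup Q)$ half, Proposition~\ref{Prop:CoherenceEquivalent} shows that $P \cup Q$ is itself an equivalence under coherence, so the identical argument---just applied to $P \cup Q$ in place of $P \cap Q$---gives the conclusion.

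For part (ii), I set $T := P \cup Q$ and use the standing hypothesis that $T$ is a tolerance induced by the irredundant covering $\mathcal{H}(P+Q)$. Three facts recalled in Section~\ref{Sec:IntroPrel} then assemble the conclusion: \cite[Theorem~4.8]{JarRad14} yields that $\mathit{RS}(T)$ is a completely distributive complete lattice (hence in particular a complete distributive lattice); the result of \cite{JarRad18} gives the regular double pseudocomplemented structure, with pseudocomplement $(A,B)^* = ((B^c)_T, (B^c)^T)$ and dual pseudocomplement $(A,B)^+ = ((A^c)_T, (A^c)^T)$; and the Kleene-algebra property was noted in \cite{JarRad14} for any tolerance induced by an irredundant covering, realised through the polarity ${\sim}(A,B) = (B^c, A^c)$ which the preliminaries already attach to any $\mathit{RS}(T)$.

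There is no serious obstacle here: the proposition is essentially a dictionary translation of prior theorems once one has recognised the correct ambient structure in each case. The only points requiring attention are verifying that under coherence both $P \cap Q$ and $P \cup Q$ really are equivalences (so that the sharper equivalence-based theorems, rather than the weaker tolerance-based ones, become available for part (i)), and observing in part (ii) that the hypothesis ``$\mathcal{H}(P+Q)$ induces $P \cup Q$'' is precisely the combinatorial condition needed to invoke \cite[Theorem~4.8]{JarRad14} and the regularity/Kleene results that depend on it.
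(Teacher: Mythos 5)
Your proposal is correct and follows essentially the same route as the paper's own proof: part (i) is reduced via Proposition~\ref{Prop:CohenrentIntersection} and Proposition~\ref{Prop:CoherenceEquivalent} to the known structure of $\mathit{RS}(E)$ for an equivalence $E$, and part (ii) is reduced to the tolerance-based results for irredundant coverings recalled in Section~\ref{Sec:IntroPrel}. You merely spell out the citations (Pomyka{\l}a--Pomyka{\l}a, Comer, \cite[Theorem~4.8]{JarRad14}, \cite{JarRad18}) that the paper leaves implicit behind the phrase ``known properties.''
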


\begin{proof}
(i) Let $P$ and $Q$ be coherent. By Proposition~\ref{Prop:CohenrentIntersection}, $\mathit{RS}(P+Q) = \mathit{RS}(P \cap Q)$.
  By Proposition~\ref{Prop:CoherenceEquivalent}, $P \cup Q$ is an equivalence. The claim now follows from
  the known properties of the rough set lattices determined by an equivalence.

(ii) Since $P\cup Q$ is a tolerance relation, $RS(P\cup Q)$ is a completely distributive lattice if and only if $P \cup Q$ is 
induced by an irredundant covering of $U$. We already noted in the previous section that this holds if and only if
$P\cup Q$ is induced by $\mathcal{H}(P+Q)$. As noted in Section~\ref{Sec:IntroPrel}, in this case $RS(P\cup Q)$
forms a pseudocomplemented Kleene algebra which is regular.
\end{proof}

It is claimed in \cite{Kong2019} that $\mathit{RS}(P + Q)$ is always a distributive and pseudocomplemented lattice (Theorems 7.1 and 7.3).
This is not generally true, as can be seen in our next example.

\begin{example} \label{Exa:RoughSets} Let us continue Example~\ref{Exa:ApproximationOrders}. Now $\mathit{RS}(P + Q)$
consists of the pairs
\[
 \{ (\emptyset,\emptyset), (\{a\},\{a\}), (\emptyset,\{b\}), (\{c\},\{c\}),
 (\{a,b\},\{a,b\}),  (\{a,c\},U),  (\{b,c\},\{b,c\}), (U,U) \}.
\]
The ordered set $\mathit{RS}(P + Q)$  is not a lattice, because,
for instance, $(\{a\},\{a\})$ and $(\emptyset,\{b\})$ have the minimal upper bounds $(\{a,b\},\{a,b\})$ and $(\{a,c\},U)$,
but not a least one. On the other hand,
\[
 \mathit{RS}(P \cup Q) =
 (\emptyset,\emptyset), (\emptyset,\{a,b\}), (\emptyset,\{b,c\}), (\emptyset,U),  (\{a\},U),  (\{c\},U), (U,U) \}
\]
forms a regular double pseudocomplemented lattice and a Kleene algebra. The Hasse diagrams of these ordered sets are
presented in Figure~\ref{Fig:Figure2}. For simplicity, we denote subsets of $U$  by sequences of letters. For instance, $\{a,b\}$ is written as $ab$.
\begin{figure}[h]
\centering
\includegraphics[width=120mm]{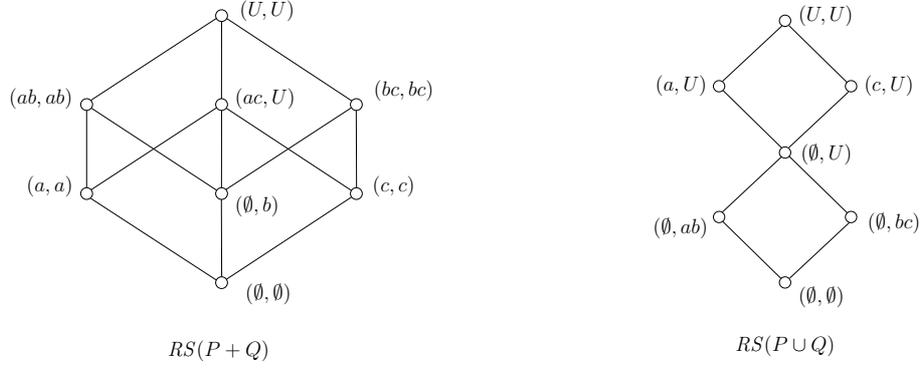}
\caption{The Hasse diagrams of $\mathit{RS}(P + Q)$ and $\mathit{RS}(P \cup Q)$. \label{Fig:Figure2}}
\end{figure}
\end{example}

Let $P,Q\subseteq U\times U$ be equivalence relations. We introduce the condition:
\begin{equation}\label{Eq:Singletons} \tag{C}
 \text{If $|(P \cap Q)  (x)| = 1$, then $|P(x)| = 1$  or $|Q(x)| = 1$.}
\end{equation}

In what follows, we will prove that $RS(P+Q)$ is a complete lattice whenever the equivalences $P$ and $Q$ satisfy condition
(C). For this, we need to consider first some elementary facts. The following lemma is well known; see \cite[Lemma 2.6]{JaKoRa2019}, for instance.

\begin{lemma} \label{Lem:Subsets}
If $R$ and $S$ are equivalences on $U$ such that $R \subseteq S$, then for any
$X\subseteq U$,
\[ \left(  X^{R} \right)^{S} = \left(  X^{S}\right)^{R}=X^{S} . \]
\end{lemma}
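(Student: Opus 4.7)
The strategy is to reduce both equalities to the idempotence property (E1) for the equivalence $S$, combined with a short monotonicity-in-the-relation observation.

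First I would record the auxiliary fact that for any two reflexive relations $R \subseteq S$ on $U$ one has $X^R \subseteq X^S$ for every $X \subseteq U$. This is immediate from the definition: $R(x) \subseteq S(x)$ implies $R(x) \cap X \subseteq S(x) \cap X$, so any $x$ witnessing $R(x) \cap X \neq \emptyset$ also witnesses $S(x) \cap X \neq \emptyset$. (This fact is not listed among (T1)--(T6) but needs only a single line.)

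For the equality $(X^R)^S = X^S$: the inclusion $X^S \subseteq (X^R)^S$ follows by applying the $S$-upper approximation to $X \subseteq X^R$ (which is (T4)), using monotonicity (T5). For the reverse inclusion, I would apply the auxiliary fact above to $X^R$ in place of $X$, giving $(X^R)^S \subseteq (X^S)^S$; and then (E1), applied to the equivalence $S$, collapses $(X^S)^S$ to $X^S$.

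For the equality $(X^S)^R = X^S$: the inclusion $X^S \subseteq (X^S)^R$ is just (T4) applied to the set $X^S$. The reverse inclusion $(X^S)^R \subseteq X^S$ again uses the monotonicity-in-the-relation observation, this time with $X^S$ as the set: $(X^S)^R \subseteq (X^S)^S$, and once more (E1) finishes the job by giving $(X^S)^S = X^S$.

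There is no genuine obstacle here; the whole argument is essentially two applications of (E1) with the monotonicity fact as the bridge, so the proof should occupy only a few lines.
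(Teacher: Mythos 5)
The paper does not actually prove this lemma: it is stated as ``well known'' with a citation to \cite[Lemma 2.6]{JaKoRa2019}, so there is no in-paper argument to compare yours against. Your proof is correct and self-contained, and it uses exactly the ingredients the paper has already set up in Section~\ref{Sec:IntroPrel}: the auxiliary monotonicity-in-the-relation fact $R \subseteq S \Rightarrow X^R \subseteq X^S$ (which indeed is a one-liner from $R(x) \subseteq S(x)$), extensivity (T4), monotonicity (T5), and idempotence (E1) for the equivalence $S$. The only blemish is a mislabeled justification in the first half: you say the inclusion $(X^R)^S \subseteq (X^S)^S$ comes from ``applying the auxiliary fact to $X^R$ in place of $X$,'' but that substitution would yield $(X^R)^R \subseteq (X^R)^S$, which is not what you need. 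The inclusion you want follows instead from the auxiliary fact applied to $X$ itself (giving $X^R \subseteq X^S$) followed by monotonicity (T5) of the map $Y \mapsto Y^S$. Since the inclusion is true and its correct justification is equally short, this is a misattributed step rather than a gap; with that citation fixed, the proof is a perfectly good replacement for the external reference.
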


\begin{corollary} \label{Cor:Intersections}
Let $P$ and $Q$ be equivalences on $U$. The following facts hold for any $X,Y \subseteq U$:
\begin{enumerate}[\rm (i)]
\item $\left ( X^{P\cap Q} \right)^{P+Q} = X^{P+Q}$;

\item $X^{P\cap Q} = Y^{P\cap Q}$  implies $X^{P+Q} = Y^{P+Q}$.
\end{enumerate}
\end{corollary}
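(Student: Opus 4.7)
The plan is to exploit the identity $X^{P+Q} = X^P \cap X^Q$ from Section~\ref{Sec:multi}, combined with the cited Lemma~\ref{Lem:Subsets} applied to the inclusions $P \cap Q \subseteq P$ and $P \cap Q \subseteq Q$. Both relations in Lemma~\ref{Lem:Subsets} are equivalences, so we are entitled to substitute $R = P \cap Q$ and either $S = P$ or $S = Q$.

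For part (i), I would first compute $(X^{P \cap Q})^{P+Q}$ by unpacking the optimistic upper approximation:
\[
\bigl(X^{P \cap Q}\bigr)^{P+Q} = \bigl(X^{P \cap Q}\bigr)^{P} \cap \bigl(X^{P \cap Q}\bigr)^{Q}.
\]
By Lemma~\ref{Lem:Subsets} with $R = P \cap Q$, $S = P$, we get $(X^{P \cap Q})^{P} = X^{P}$, and analogously $(X^{P \cap Q})^{Q} = X^{Q}$. Intersecting these, the right-hand side collapses to $X^{P} \cap X^{Q} = X^{P+Q}$, which yields (i).

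Part (ii) is then an immediate consequence: apply the operator $(\cdot)^{P+Q}$ to both sides of the hypothesis $X^{P \cap Q} = Y^{P \cap Q}$, and use (i) on each side to recover $X^{P+Q} = Y^{P+Q}$. There is no real obstacle here, since Lemma~\ref{Lem:Subsets} does all the heavy lifting; the only thing to check carefully is that the roles of $P \cap Q$ (the finer equivalence) and $P$, $Q$ (the coarser ones) are taken in the correct order when invoking the lemma, but once this is fixed, both claims follow in a few lines.
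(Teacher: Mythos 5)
Your proposal is correct and follows exactly the paper's own argument: decompose $(X^{P\cap Q})^{P+Q}$ as $(X^{P\cap Q})^{P}\cap(X^{P\cap Q})^{Q}$, collapse each factor via Lemma~\ref{Lem:Subsets} using $P\cap Q\subseteq P,Q$, and derive (ii) by applying $(\cdot)^{P+Q}$ to both sides of the hypothesis and invoking (i). No differences worth noting.
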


\begin{proof} 
(i) We have $\left(X^{P\cap Q}\right)^{P+Q} =\left (X^{P\cap Q}\right)^{P} \cap \left( X^{P\cap Q}\right)^{Q}$.
Since $P\cap Q\subseteq P,Q$, we get 
$\left(  X^{P \cap Q}\right)^{P} = X^{P}$ and $\left( X^{P\cap Q}\right)^{Q}=X^{Q}$ by Lemma~\ref{Lem:Subsets}.  
Thus, we obtain $\left( X^{P\cap Q}\right)^{P+Q} = X^{P}\cap X^{Q} = X^{P+Q}$.

(ii) If $X^{P\cap Q} = Y^{P\cap Q}$, then 
$X^{P+Q} = \left( X^{P\cap Q} \right)^{P+Q} = \left( Y^{P\cap Q}\right)^{P+Q} = Y^{P+Q}$ by (i).
\end{proof}

\begin{lemma} \label{Lem:OptDefinable}
The elements of $\wp(U)^{P+Q}$ and $\wp(U)_{P+Q}$ are $P\cap Q$-definable.
\end{lemma}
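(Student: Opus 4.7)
The plan is to reduce everything to the fact that $A$ is $(P\cap Q)$-definable if and only if $A$ is $(P\cap Q)$-saturated (this equivalence is recorded in Section~\ref{Sec:IntroPrel} for any equivalence relation, and $P\cap Q$ is an equivalence since both $P$ and $Q$ are).

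First I would record a small monotonicity observation: if $R\subseteq S$ are equivalences, then every $S$-class is a union of $R$-classes, so $\mathrm{Sat}(S)\subseteq\mathrm{Sat}(R)$. In particular, from $P\cap Q\subseteq P$ and $P\cap Q\subseteq Q$ we get
\[
\mathrm{Sat}(P)\cup\mathrm{Sat}(Q)\subseteq\mathrm{Sat}(P\cap Q).
\]
Together with the fact that $\mathrm{Sat}(P\cap Q)$ is a complete Boolean sublattice of $\wp(U)$, this means $\mathrm{Sat}(P\cap Q)$ is closed under arbitrary unions and intersections of members of $\mathrm{Sat}(P)\cup\mathrm{Sat}(Q)$.

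Now I would apply the identities recorded just before Proposition~\ref{Prop:Ortholattice}: for any $X\subseteq U$,
\[
X^{P+Q}=X^{P}\cap X^{Q}\qquad\text{and}\qquad X_{P+Q}=X_{P}\cup X_{Q}.
\]
Since $X^{P}\in\mathrm{Sat}(P)$ and $X^{Q}\in\mathrm{Sat}(Q)$ (the upper approximation by an equivalence always lies in its saturation lattice), both are in $\mathrm{Sat}(P\cap Q)$, and hence so is their intersection $X^{P+Q}$. The same argument, using $X_{P}\in\mathrm{Sat}(P)$, $X_{Q}\in\mathrm{Sat}(Q)$ and closure under unions, shows $X_{P+Q}\in\mathrm{Sat}(P\cap Q)$.

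There is no real obstacle here; the only thing to be careful about is the direction of the inclusion $\mathrm{Sat}(S)\subseteq\mathrm{Sat}(R)$ when $R\subseteq S$ (coarser equivalence gives fewer saturated sets). As an alternative, one could argue directly: if $y\in X^{P+Q}$ and $(y,z)\in P\cap Q$, then $P(y)=P(z)$ and $Q(y)=Q(z)$, so the defining conditions $P(y)\cap X\neq\emptyset$ and $Q(y)\cap X\neq\emptyset$ transfer verbatim to $z$, giving $z\in X^{P+Q}$; and similarly for $X_{P+Q}$. Either route is essentially immediate, and Corollary~\ref{Cor:Intersections}(i) could also be invoked to shorten the upper-approximation case.
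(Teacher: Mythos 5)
Your proof is correct, but it takes a genuinely different route from the paper's. The paper argues entirely inside the approximation-operator calculus: for $X\in\wp(U)^{P+Q}$ it uses Corollary~\ref{Cor:Intersections}(i) to get the sandwich $X\subseteq X^{P\cap Q}\subseteq\bigl(X^{P\cap Q}\bigr)^{P+Q}=X^{P+Q}=X$, and for $X\in\wp(U)_{P+Q}$ it computes $X^{P\cap Q}=\bigl(X_P\cup X_Q\bigr)^{P\cap Q}=\bigl(X_P\bigr)^{P\cap Q}\cup\bigl(X_Q\bigr)^{P\cap Q}\subseteq\bigl(X_P\bigr)^P\cup\bigl(X_Q\bigr)^Q=X_P\cup X_Q\subseteq X$. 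You instead reduce both cases uniformly to saturation: since $P\cap Q\subseteq P,Q$, the classes of $P$ and $Q$ are unions of $(P\cap Q)$-classes, so $\mathrm{Sat}(P)\cup\mathrm{Sat}(Q)\subseteq\mathrm{Sat}(P\cap Q)$, and the latter is closed under arbitrary unions and intersections; the identities $X^{P+Q}=X^P\cap X^Q$ and $X_{P+Q}=X_P\cup X_Q$ then finish both halves at once. All the facts you invoke ($\mathrm{Def}(E)=\mathrm{Sat}(E)=\wp(U)^E$ for an equivalence $E$, the lattice structure of $\mathrm{Sat}$) are recorded in Section~\ref{Sec:IntroPrel}, and your direct ``transfer along $(y,z)\in P\cap Q$'' fallback is also sound. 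What your approach buys is symmetry (the upper and lower cases are literally the same argument) and independence from Lemma~\ref{Lem:Subsets} and Corollary~\ref{Cor:Intersections}; what the paper's buys is that it stays within the operator identities it has just established, which it then reuses in the proof of Theorem~\ref{Thm:CompleteLattice}.
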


\begin{proof}
Take any $X \in \wp(U)^{P+Q}$. Then $X \subseteq X^{P\cap Q}\subseteq \left ( X^{P\cap Q} \right )^{P+Q}=X^{P+Q} = X$ 
yields $X^{P\cap Q}=X$, that is, $X$ is $P\cap Q$-definable. On the other hand, if $X\in\mathcal{P}(U)_{P+Q}$, 
then $X = X_{P+Q} = X_{P} \cup X_{Q}$ and we have
\begin{align*}
X \subseteq X^{P \cap Q} &=  \left( X_{P} \cup X_{Q} \right)^{P \cap Q} =
\left( X_{P} \right )^{P\cap Q} \cup \left( X_{Q} \right )^{P\cap Q} 
\subseteq \left( X_{P} \right)^{P} \cup \left( X_{Q} \right)^{Q} \\ 
& = X_P \cup X_Q \subseteq X.
\end{align*}
This yields $X^{P\cap Q}=X$,  finishing our proof. 
\end{proof}

\begin{theorem} \label{Thm:CompleteLattice}
Let $P$ and $Q$ be two equivalences on $U$ satisfying \eqref{Eq:Singletons}. Then
$\mathit{RS}(P+Q)$ is a complete lattice such that 
\begin{align}
\bigwedge_{X\in\mathcal{H}} (X_{P+Q},X^{P+Q}) &= \label{Eq:LatticeMeet}
\Big(  \big( \bigcap_{X\in\mathcal{H}} X_{P+Q} \big)_{P+Q}, \bigcap_{X\in\mathcal{H}} X^{P+Q} \Big) \\
\intertext{and}
\bigvee_{X \in\mathcal{H}} (X_{P+Q},X^{P+Q}) &= \label{Eq:LatticeJoin}
\Big(  \bigcup_{X\in\mathcal{H}} X_{P+Q} , \big(  \bigcup_{X\in\mathcal{H}} X^{P+Q} \big)^{P+Q} \Big) 
\end{align}
for all $\mathcal{H} \subseteq \wp(U)$.
\end{theorem}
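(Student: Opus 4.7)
The plan is to show that for any $\mathcal{H} \subseteq \wp(U)$, the pair
\[ (C, A) \;:=\; \Big(\big(\bigcap_{X \in \mathcal{H}} X_{P+Q}\big)_{P+Q}, \; \bigcap_{X \in \mathcal{H}} X^{P+Q}\Big) \]
belongs to $\mathit{RS}(P+Q)$ and is the greatest lower bound there of $\{(X_{P+Q},X^{P+Q})\}_{X\in\mathcal{H}}$; then \eqref{Eq:LatticeMeet} follows, and \eqref{Eq:LatticeJoin} is obtained by applying the polarity ${\sim}(A,B) = (B^c,A^c)$ to the family $\{((X^c)_{P+Q},(X^c)^{P+Q})\}_{X\in\mathcal{H}}$ and simplifying via the duality $((Y^c)_{P+Q})^c = Y^{P+Q}$. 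Since $\wp(U)^{P+Q}$ is a closure system, $A \in \wp(U)^{P+Q}$; likewise $C \in \wp(U)_{P+Q}$; and both are $P\cap Q$-definable by Lemma~\ref{Lem:OptDefinable}.

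The core difficulty --- and the main obstacle, where condition \eqref{Eq:Singletons} enters --- is producing a witness $Y \subseteq U$ with $Y^{P+Q}=A$ and $Y_{P+Q}=C$. Write $B = \bigcap_{X\in\mathcal{H}} X_{P+Q}$ so $C = B_{P+Q}$, and set
\[ S = \{y\in A : P(y)=\{y\} \text{ or } Q(y)=\{y\}\}.\]
For $y \in S$ with $P(y) = \{y\}$ (the case $Q(y)=\{y\}$ is symmetric) and any $X \in \mathcal{H}$, $y \in X^{P+Q}$ forces $y \in X$ and hence $P(y)\subseteq X$, i.e., $y \in X_{P+Q}$; varying $X$, $y \in B$ and then $y \in B_P \subseteq C$. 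Thus $S \subseteq C$. Condition \eqref{Eq:Singletons} now ensures that every $(P\cap Q)$-class $K$ contained in $A\setminus C$ has $|K|\geq 2$: a singleton $K=\{y\}$ would mean $|(P\cap Q)(y)|=1$, forcing $y \in S \subseteq C$, contradicting $K\cap C=\emptyset$.

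Choose a transversal $R$ with exactly one representative $r_K$ from each $(P\cap Q)$-class $K \subseteq A\setminus C$ and set $Y = C \cup R$. Then $Y \subseteq A$ gives $Y^{P+Q}\subseteq A$; conversely, if $y\in C$ then $y\in Y$ so $y\in Y^P\cap Y^Q$, while if $y\in A\setminus C$ then $r_K \in (P\cap Q)(y)\cap Y \subseteq P(y)\cap Q(y)\cap Y$, so again $y\in Y^{P+Q}$. Hence $Y^{P+Q}=A$. For $Y_{P+Q}=C$: since $C\in \wp(U)_{P+Q}$ and $C\subseteq Y$, $C = C_{P+Q}\subseteq Y_{P+Q}$. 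Conversely, take $y\in Y_P$ (the case $y\in Y_Q$ is symmetric), so $P(y)\subseteq Y$; each $(P\cap Q)$-class $K\subseteq P(y)$ must satisfy $K\subseteq C$, because the alternative $K\cap C=\emptyset$ (possible since $C$ is $P\cap Q$-saturated) would give $K\subseteq A\setminus C$ with $|K|\geq 2$ while $K\cap Y = \{r_K\}$, contradicting $K\subseteq Y$. Thus $P(y)\subseteq C$, so $y\in C$.

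With $(C,A)\in\mathit{RS}(P+Q)$ established, $(C,A)$ is a lower bound of the family because $B\subseteq X_{P+Q}$ gives $C = B_{P+Q}\subseteq (X_{P+Q})_{P+Q} = X_{P+Q}$, and $A\subseteq X^{P+Q}$ is immediate. Any lower bound $(Z_{P+Q},Z^{P+Q})$ satisfies $Z_{P+Q}\subseteq B$, hence $Z_{P+Q} = (Z_{P+Q})_{P+Q}\subseteq B_{P+Q} = C$, while $Z^{P+Q}\subseteq A$. This proves \eqref{Eq:LatticeMeet}, and \eqref{Eq:LatticeJoin} follows by the polarity argument sketched at the start.
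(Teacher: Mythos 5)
Your proof is correct and follows essentially the same route as the paper: use condition (C) together with the $P\cap Q$-definability of the approximations to show that every $(P\cap Q)$-class in the boundary region is a non-singleton, adjoin a transversal of those classes to the lower part to obtain a witness set realizing the candidate meet, and then transfer the meet formula to the join via the polarity $\sim$. The only (immaterial) difference is that you take the transversal over the classes of $A\setminus B_{P+Q}$ and build the witness on top of $B_{P+Q}$, whereas the paper works with $A\setminus B$ and the base set $B=\bigcap_{X\in\mathcal{H}}X_{P+Q}$ itself.
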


\begin{proof}
First, we prove equation~\eqref{Eq:LatticeMeet}. 
Clearly, $\left( \bigcap \{ X_{P+Q} \mid X \in \mathcal{H}\} \right)_{P+Q}$ belongs to $\wp(U)_{P+Q}$ 
and $\bigcap \{ X^{P+Q} \mid X \in \mathcal{H}\}$ is in $\wp(U)^{P+Q}$. 
We prove that their pair forms a rough set, that is, we are going to construct a set $Z\subseteq U$ with 
\[ Z_{P+Q} =  \big( \bigcap_{X\in\mathcal{H}} X_{P+Q} \big)_{P+Q} \text{ \ and \ } \
Z^{P+Q} = \bigcap_{X\in\mathcal{H}} X^{P+Q}.\]
By Lemma~\ref{Lem:OptDefinable}, $X_{P+Q}$ and $X^{P+Q}$ are $P\cap Q$-definable, 
whence $\bigcap \{ X_{P+Q} \mid X \in \mathcal{H}\}$ and 
$\bigcap \{ X^{P+Q} \mid X \in \mathcal{H}\}$ are also $P\cap Q$-definable sets. 
Hence, their difference 
\[
D := \Big (  \bigcap_{X\in\mathcal{H}} X^{P+Q} \Big) \setminus 
\left( \bigcap_{X\in\mathcal{H}} X_{P+Q} \right) \]
is also  $P\cap Q$-definable.
We claim that for any element $x\in D$, $|( P\cap Q)(x)| \geq 2$. 
Indeed, suppose $|(P \cap Q)(x)| = 1$. Then condition (C) gives 
$|P(x)| = 1$ or $|Q(x)|=1$. Let us consider the case $|P(x)|=1$. Now $x\in X^{P+Q} = X^{P}\cap X^{Q} $ for
all $X\in \mathcal{H}$. We get $P(x)=\{x\}\subseteq X$ for any $X\in\mathcal{H}$, and this
implies $x\in X_{P}\subseteq X_{P}\cup X_{Q}=X_{P+Q}$, for all $X\in
\mathcal{H}$. Thus we obtain $x \in \bigcap \{ X_{P+Q} \mid X\in\mathcal{H}\}$,
a contradiction. Similarly, we may show that $|Q(x)|=1$ is not possible.
Hence,  $|( P\cap Q)(x)| \geq 2$ for each $x\in D$.

Let $e$ be the restriction of $P \cap Q$ into $D$. Then, as we just noted, each $e$-class has at least two
elements. By the Axiom of Choice, there is a function $f \colon D/e \to D$ which picks
from each $D/e$-class $\beta$ one element $f(\beta)$ of $D$. Let $\Gamma$ be the image set of $f$, that is,
\[ \Gamma := \{ f(\beta) \mid \beta \in D/e \}.\]
It is now clear that $\Gamma^{P \cap Q} = D$. Let us define 
\[
Z := \Big ( \bigcap_{X \in \mathcal{H}} X_{P+Q} \Big ) \cup \Gamma.
\]
By the definition of $D$, we have $\bigcap \{ X_{P+Q} \mid X\in\mathcal{H}\} \cup D = \bigcap \{ X^{P+Q} \mid X\in\mathcal{H}\}$.
Therefore, by Corollary~\ref{Cor:Intersections},
\begin{align*}
Z^{P+Q} &= \big ( Z^{P\cap Q}\big)^{P+Q} = \Big ( \Big ( \bigcap_{X \in \mathcal{H}} X_{P+Q} \Big ) \cup \Gamma\Big)^{P\cap Q}\Big)^{P+Q} \\
& = \Big (  \big(  \bigcap_{X\in\mathcal{H}} X_{P+Q} \big)^{P\cap Q} \cup \Gamma^{P\cap Q} \Big)^{P+Q} 
= \Big (  \big ( \bigcap_{X\in\mathcal{H}} X_{P+Q} \big )  \cup D \Big )^{P+Q} \\
& = \big ( \bigcap_{X\in\mathcal{H}} X^{P+Q} \big)^{P+Q} = \bigcap_{X\in\mathcal{H}} X^{P+Q}.
\end{align*}

We now prove the equality for the other component. Because $\bigcap_{X\in\mathcal{H}} X_{P+Q} \subseteq Z$, 
we have $\big( \bigcap_{X\in\mathcal{H}} X_{P+Q} \big)_{P+Q} \subseteq Z_{P+Q}$.
In order to prove the converse inclusion, let $x \in Z_{P + Q}$. This means that $P(x) \subseteq Z$ or $Q(x) \subseteq Z$.
We show that $P(x) \subseteq Z = \bigcap \{ X_{P + Q} \mid X \in \mathcal{H}\} \cup \Gamma$ implies
$P(x) \subseteq \bigcap \{ X_{P + Q} \mid X \in \mathcal{H}\}$, which is equivalent to $P(x) \cap \Gamma = \emptyset$.

Suppose that $P(x) \subseteq Z$ and $P(x) \cap \Gamma \neq \emptyset$. There is $y \in \Gamma$ such that $(x,y) \in P$.
Because 
\[ y \in \Gamma \subseteq \Gamma^{P \cap Q} = (D \setminus \Gamma)^{P \cap Q},\]
there exists an element $z \in D \setminus \Gamma$ such that $(y,z) \in P \cap Q \subseteq P$. We have that
$(x,z) \in P$ and $z \in P(x) \subseteq Z$. This is impossible since
\begin{align*}
Z \cap (D \setminus \Gamma) & = \Big ( \big (\bigcap_{X \in \mathcal{H}} X_{P + Q} \big ) \cup \Gamma \Big ) \cap (D \setminus \Gamma)  \\
& =\Big ( \big (\bigcap_{X \in \mathcal{H}} X_{P + Q} \big ) \cap (D \setminus \Gamma) \Big )
        \cup \underbrace{\big (\Gamma \cap (D \setminus \Gamma)\big)}_{\emptyset} \\
& \subseteq  \Big (\bigcap_{X \in \mathcal{H}} X_{P + Q} \Big ) \cap D = \emptyset .
\end{align*}
Thus, $P(x) \subseteq \bigcap \{ X_{P + Q} \mid X \in \mathcal{H}\}$.
Similarly, we can show that  $Q(x) \subseteq Z$ implies $Q(x) \subseteq \bigcap \{ X_{P + Q} \mid X \in \mathcal{H}\}$. 
Therefore, $x \in \big (\bigcap \{ X_{P + Q} \mid X \in \mathcal{H}\}\big )_{P + Q}$ and we have proved 
that
\[ Z_{P+Q} =  \big( \bigcap_{X\in\mathcal{H}} X_{P+Q} \big)_{P+Q}.\] 
This means that \eqref{Eq:LatticeMeet} holds.

Equation \eqref{Eq:LatticeMeet} says that $\textit{RS}(P+Q)$ is a complete meet-semilattice. 
Since $\textit{RS}(P+Q)$ is self-dual by the map $\sim$, it is also a complete join-semilattice, and hence it is a complete lattice. 
Then for any $\mathcal{H} \subseteq \wp(U)$, the join 
\[ 
\bigvee_{X \in\mathcal{H}} (X_{P+Q},X^{P+Q})
\]
exists in $(\mathit{RS}(P+Q)$ and 
\begin{align*}
\bigvee_{X\in\mathcal{H}} (X_{P+Q},X^{P+Q}) &=  {\sim} \Big( \bigwedge_{X\in\mathcal{H}} {\sim} (X_{P+Q},X^{P+Q} ) \Big)  \\
&= {\sim} \Big( \bigwedge_{X\in\mathcal{H}} \big ( ( X^{P+Q})^c, (X_{P+Q})^c \big)\Big) \\
&= {\sim} \Big (  \big(  \bigcap_{X\in\mathcal{H}} (  X^{P+Q})^{c} \big )_{P+Q}, \bigcap_{X\in\mathcal{H}} \big ( X_{P+Q}\big)^{c} \Big) \\
&= \Big(  \big(  \bigcap_{X\in\mathcal{H}} \left(  X_{P+Q} \right)^{c} \big)^c, 
\big(  \big ( \bigcap_{X\in\mathcal{H}} ( X^{P+Q} )^c \big)_{P+Q}\big)^{c}\Big)  \\
&= \Big(   \bigcup_{X\in\mathcal{H}}  X_{P+Q} , 
\big(  \bigcup_{X\in\mathcal{H}}  X^{P+Q} \big)^{P+Q} \Big),  \\
\end{align*}
proving \eqref{Eq:LatticeJoin}.
\end{proof}

\begin{example}
There exist equivalences $P$ and $Q$ such that  $\mathit{RS}(P + Q)$ is a nondistributive lattice.
As we shall see in Remark~\ref{Rem:Nondistributive}, for a finite $U$,  $\mathit{RS}(P + Q)$ is not
distributive if $P$ and $Q$ are not coherent. Therefore, it is enough to find noncoherent equivalences $P$ and $Q$
on a finite set satisfying (C) by Theorem~\ref{Thm:CompleteLattice}.

Let $P$ and $Q$ be equivalences on $U = \{1,2,3,4\}$ such that 
\[
U/P = \{ \{1,2,3\}, \{4\}\} \quad \text{ and } \quad U/Q = \{ \{1\}, \{2, 3, 4\}\}.
\]
Now $P$ and $Q$ are not coherent, but (C) is satisfied.

The lattice  $\mathit{RS}(P + Q)$ is depicted in Figure~\ref{Fig:nondistributive}.
A lattice is said to be \emph{semimodular} (or \emph{upper semimodular}) if
$a \wedge b \prec a$ implies $b \prec a \vee b$. The dual property is called \emph{lower semimodular}.
Modular lattices are both upper and lower semimodular \cite{Birk95}. As we already noted, distributive lattices are modular.
If we select $a = (\{1\},\{1\})$ and $b = (\{4\},\{4\})$, we see that $\mathit{RS}(P + Q)$ is not upper semimodular.
Because $\mathit{RS}(P + Q)$ is self-dual, it is not lower semimodular either.

Additionally, $\mathit{RS}(P + Q)$  is not pseudocomplemented, because $(\emptyset,\{2, 3\})$ does not have a pseudocomplement.
\begin{figure}
\centering
\includegraphics[width=60mm]{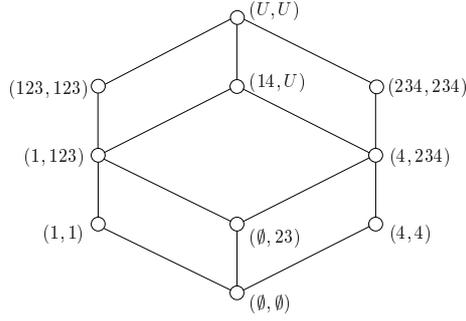}
\caption{The Hasse diagram of nondistributive $\mathit{RS}(P + Q)$ \label{Fig:nondistributive}}
\end{figure}
\end{example}

\section{The smallest completion of $\mathit{RS}(P + Q)$} \label{Sec:Completion}

Let $P$ and $Q$ be equivalences on $U$. We denote by
\[ \Sigma_P = \{ P(x) \mid P(x) = \{x\} \} \qquad \text{and} \qquad \Sigma_Q = \{ Q(x) \mid Q(x) = \{x\} \} \]
the collections of singleton equivalence classes of $P$ and $Q$, respectively. In addition, we define
\[ \mathit{IRS}(P + Q) = \{ (A,B) \in \wp(U)_{P + Q} \times \wp(U)^{P + Q} \mid A \subseteq B \ \text{and} \
   (B \setminus A) \cap (\Sigma_P \cup \Sigma_Q) = \emptyset \}. \]
The set $\mathit{IRS}(P + Q)$ is called the \emph{increasing representation of $P + Q$-rough sets}.

\begin{lemma} \label{Lem:Inclusion}  $\mathit{RS}(P + Q) \subseteq \mathit{IRS}(P + Q)$.
\end{lemma}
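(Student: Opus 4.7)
The plan is to pick an arbitrary element $(X_{P+Q},X^{P+Q})$ of $\mathit{RS}(P+Q)$ and verify the three defining conditions of $\mathit{IRS}(P+Q)$ in turn. Membership in $\wp(U)_{P+Q}\times\wp(U)^{P+Q}$ is immediate by the very definitions of the two families. The inclusion $X_{P+Q}\subseteq X^{P+Q}$ follows at once from property (T4) applied to $P$ and to $Q$: since $X_P,X_Q\subseteq X\subseteq X^P,X^Q$, we have $X_{P+Q}=X_P\cup X_Q\subseteq X\subseteq X^P\cap X^Q=X^{P+Q}$.

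The only substantive step is to show $(X^{P+Q}\setminus X_{P+Q})\cap(\Sigma_P\cup\Sigma_Q)=\emptyset$, where $\Sigma_P\cup\Sigma_Q$ is read as the set of elements $x\in U$ with $P(x)=\{x\}$ or $Q(x)=\{x\}$. I would argue by contradiction: assume some $x$ lies in the intersection. Then $x\in X^{P+Q}$ gives both $P(x)\cap X\neq\emptyset$ and $Q(x)\cap X\neq\emptyset$, while $x\notin X_{P+Q}$ gives $P(x)\nsubseteq X$ and $Q(x)\nsubseteq X$. If $P(x)=\{x\}$, then $P(x)\cap X\neq\emptyset$ forces $x\in X$, whence $P(x)=\{x\}\subseteq X$, contradicting $P(x)\nsubseteq X$. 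The case $Q(x)=\{x\}$ is entirely symmetric. Hence no such $x$ exists, and condition (4) holds.

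I do not anticipate any real obstacle; the only delicate point is reading $\Sigma_P\cup\Sigma_Q$ as a subset of $U$ (identifying the singleton class $\{x\}$ with its element $x$) so that the intersection with $B\setminus A$ makes sense. Once this convention is fixed, the argument is a one-line case split on whether the offending singleton class comes from $P$ or from $Q$.
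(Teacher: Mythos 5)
Your proposal is correct and follows essentially the same route as the paper: membership in the product and the inclusion $X_{P+Q}\subseteq X^{P+Q}$ are immediate, and the third condition is checked by the same contradiction argument on a singleton class (the paper, like you, implicitly identifies the singleton class $\{x\}\in\Sigma_P\cup\Sigma_Q$ with the element $x$). Your version merely spells out the step "$x\in X^{P+Q}$ implies $x\in X_{P+Q}$ when $P(x)=\{x\}$" in slightly more detail than the paper does.
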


\begin{proof} Any element of $\mathit{RS}(P + Q)$ has the form $(X_{P + Q},X^{P + Q})$ for some $X \subseteq U$.
Therefore, $(X_{P + Q},X^{P + Q})$ belongs to $\wp(U)_{P + Q} \times \wp(U)^{P + Q}$ and $X_{P + Q} \subseteq X^{P + Q}$.

We show that $(X^{P + Q} \setminus X_{P + Q}) \cap (\Sigma_P \cup \Sigma_Q) = \emptyset$. Assume, by contradiction, that there
exists an element $x \in (X^{P + Q} \setminus X_{P + Q}) \cap \Sigma_P$. This means that $P(x) = \{x\}$. Therefore,
$x \in X^{P + Q}$ implies $x \in X_{P + Q}$, a contradiction. Similarly, we can show that existence of an element
in $(X^{P + Q} \setminus X_{P + Q}) \cap \Sigma_Q$ is impossible. Hence,
$(X^{P + Q} \setminus X_{P + Q}) \cap (\Sigma_P \cup \Sigma_Q) = \emptyset$. Now we have shown that
$(X_{P + Q},X^{P + Q})$ belongs to $\mathit{IRS}(P + Q)$.
\end{proof}

Let $I$ be an arbitrary index set. For a family $\{L_i\}_{i \in I}$ of complete lattices, the
\emph{direct product} $\prod_{i \in I} L_i$ is a complete lattice with respect to the componentwise order
\[ (x_i)_{i \in I} \leq (y_i)_{i \in I} \iff x_i \leq y_i \quad \text{for all $i \in I$.} \]
The joins and meets are formed componentwise. The lattices $L_i$ are the \emph{factors} of the product.
The maps
\[ \pi_k \big ((x_i)_{i \in I} \big ) = x_k, \quad \text{for $k \in I$,}  \]
are the \emph{canonical projections}. The canonical projections are surjective complete
homomorphisms, that is, they preserve all joins and meets.

Hence, the direct product $\wp(U)_{P + Q} \times \wp(U)^{P + Q}$ is ordered by
\[ (A,B) \leq (C,D) \iff A \subseteq C \quad \text{and} \quad B \subseteq D\]
and it forms a complete lattice such that 
\[
\bigvee_{i \in I} (A_i,B_i) = \Big ( \bigcup_{i \in I} A_i,  \big ( \bigcup_{i \in I} B_i \big)^{P + Q} \Big)
\qquad  \text{and} \qquad
\bigwedge_{i \in I} (A_i,B_i) = \Big ( \big (\bigcap_{i \in I} A_i \big)_{P + Q},  \bigcap_{i \in I} B_i \Big)
\]
for all $\{(A_i,B_i) \mid i \in I\} \subseteq \wp(U)_{P + Q} \times \wp(U)^{P + Q}$. 
The maps
\[
\pi_1 \colon (A,B) \mapsto A \quad \text{and} \quad \pi_2 \colon (A,B) \mapsto B 
\]
are the corresponding canonical projections. The product  $\wp(U)_{P + Q} \times \wp(U)^{P + Q}$
has a polarity defined by
\[ {\sim} \colon (A,B) \mapsto (B^c,A^c). \]
Indeed, if $(A,B)$ and $(C,D)$ are elements of  $\wp(U)_{P + Q} \times \wp(U)^{P + Q}$, then
\[ {\sim}{\sim}(A,B) = {\sim}(B^c,A^c) = (A^{cc},B^{cc}) = (A,B),\]
and
\begin{align*}
(A,B) \leq (C,D) &\iff A \subseteq C \quad \text{and} \quad B \subseteq D
\iff C^c \subseteq A^c \quad \text{and} \quad D^c \subseteq B^c \\
     & \iff (D^c,C^c) \leq (B^c,A^c) \iff  {\sim}(C,D) \leq {\sim}(A,B). 
\end{align*} 

A \emph{complete subdirect product} of complete lattices is a complete sublattice of the direct product
for which the canonical projections onto the factors are all surjective; see \cite{ganter1999formal}, for example.

\begin{proposition} \label{Prop:IRSC_Lattice}
$\mathit{IRS}(P + Q)$ is a complete subdirect product of $\wp(U)_{P+Q}$ and $\wp(U)^{P+Q}$.
\end{proposition}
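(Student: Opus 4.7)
The plan is to verify the two defining conditions of a complete subdirect product: first, that both canonical projections $\pi_1$ and $\pi_2$, when restricted to $\mathit{IRS}(P+Q)$, are surjective onto $\wp(U)_{P+Q}$ and $\wp(U)^{P+Q}$ respectively; and second, that $\mathit{IRS}(P+Q)$ is closed under the arbitrary joins and meets of the direct product $\wp(U)_{P+Q}\times\wp(U)^{P+Q}$ described above. Throughout, the key elementary observation that I will use repeatedly is that if $P(x)=\{x\}$ and $x$ belongs to $Y^{P+Q}=Y^P\cap Y^Q$ for some $Y\subseteq U$, then $P(x)\cap Y\neq\emptyset$ forces $x\in Y$, and hence $P(x)=\{x\}\subseteq Y$ gives $x\in Y_P\subseteq Y_{P+Q}$; the symmetric statement holds for $Q$.

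For the surjectivity of $\pi_1$, given any $A\in\wp(U)_{P+Q}$ I would form the pair $(A,A^{P+Q})$. Clearly $A\subseteq A^{P+Q}\in\wp(U)^{P+Q}$, and if $x\in A^{P+Q}\setminus A$ with $x\in\Sigma_P\cup\Sigma_Q$, the observation above (applied with $Y=A$) yields $x\in A_{P+Q}=A$, a contradiction. Hence $(A,A^{P+Q})\in\mathit{IRS}(P+Q)$ and $\pi_1(A,A^{P+Q})=A$. Dually, for $B\in\wp(U)^{P+Q}$ the pair $(B_{P+Q},B)$ works and witnesses surjectivity of $\pi_2$.

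For closure under joins, fix $\{(A_i,B_i)\}_{i\in I}\subseteq\mathit{IRS}(P+Q)$ and consider
\[
\Big(\bigcup_{i\in I}A_i,\ \big(\bigcup_{i\in I}B_i\big)^{P+Q}\Big).
\]
The first coordinate lies in $\wp(U)_{P+Q}$ because the interior system is closed under unions, and the second coordinate lies in $\wp(U)^{P+Q}$ by construction; the inclusion between them is immediate from $A_i\subseteq B_i$. To verify the singleton condition, suppose $x\in(\bigcup_i B_i)^{P+Q}$ and $P(x)=\{x\}$; by the key observation with $Y=\bigcup_i B_i$, we get $x\in B_j$ for some $j$, and then the hypothesis $(B_j\setminus A_j)\cap\Sigma_P=\emptyset$ gives $x\in A_j\subseteq\bigcup_i A_i$. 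The case $Q(x)=\{x\}$ is analogous. Closure under meets is completely dual: for $\big((\bigcap_i A_i)_{P+Q},\bigcap_i B_i\big)$, if $x\in\bigcap_i B_i$ with $P(x)=\{x\}$, then for every $i$ the singleton condition applied to $(A_i,B_i)$ gives $x\in A_i$, hence $x\in\bigcap_i A_i$ and, since $P(x)=\{x\}\subseteq\bigcap_i A_i$, also $x\in(\bigcap_i A_i)_{P+Q}$.

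I do not anticipate a serious obstacle; the only point requiring care is keeping the singleton condition stable under joins and meets, and this reduces cleanly to the elementary observation in the first paragraph. Once both projections are surjective and $\mathit{IRS}(P+Q)$ is closed under the product's arbitrary joins and meets, by definition it is a complete subdirect product of $\wp(U)_{P+Q}$ and $\wp(U)^{P+Q}$.
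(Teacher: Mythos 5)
Your proposal is correct and follows essentially the same route as the paper: verify closure of $\mathit{IRS}(P+Q)$ under the product's arbitrary joins and meets by checking the singleton condition via the observation that $P(x)=\{x\}$ and $x\in Y^{P+Q}$ force $x\in Y_{P+Q}$, and obtain surjectivity of the projections from pairs of the form $(X_{P+Q},X^{P+Q})$. The only cosmetic differences are that you spell out the meet case (which the paper leaves as ``analogous'') and verify membership of $(A,A^{P+Q})$ directly rather than citing Lemma~\ref{Lem:Inclusion}.
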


\begin{proof}
Let $\{(A_i,B_i) \mid i \in I\} \subseteq \mathit{IRS}(P + Q)$.
First we show that $( \bigcup_{i \in I} A_i, ( \bigcup_{i \in I} B_i )^{P + Q} )$ belongs to $\mathit{IRS}(P + Q)$.
As $A_i$ belongs to $\wp(U)_{P + Q}$ for all $i \in I$, we have $\bigcup_{i \in I} A_i \in \wp(U)_{P + Q}$.
Obviously, $(\bigcup_{i \in I} B_i )^{P + Q}  \in \wp(U)^{P + Q}$. It is also clear that
$\bigcup_{i \in I} A_i \subseteq \bigcup_{i \in I} B_i \subseteq (\bigcup_{i \in I} B_i )^{P + Q}$. Let us
assume by contradiction that there exists an element $x \in \Sigma_P$ such that
\[ x \in  \big ( \bigcup_{i \in I} B_i \big )^{P + Q} \mathbin{\big\backslash} \big (\bigcup_{i \in I} A_i \big ).\]
Then $P(x) = \{x\}$ and $x \in ( \bigcup_{i \in I} B_i )^{P} \cap  ( \bigcup_{i \in I} B_i )^{Q}$ gives
$x \in  ( \bigcup_{i \in I} B_i )^{P} =  \bigcup_{i \in I} {B_i}^{P}$. Hence, $P(x) \cap B_k$ for some $k \in I$, which
yields $x \in B_k$. Because $B_k = (B_k \setminus A_k) \cup A_k$ and $x \notin B_k \setminus A_k$ by the definition of $\mathit{IRS}(P + Q)$,
we obtain $x \in A_k \subseteq \bigcup_{i \in I} A_i$, a contradiction. The case $x \in \Sigma_Q$ is analogous. We may deduce
\[ \Big ( \big ( \bigcup_{i \in I} B_i \big )^{P + Q} \mathbin{\big\backslash} \big (\bigcup_{i \in I} A_i \big ) \Big )
\cap (\Sigma_P \cup \Sigma_Q) = \emptyset.\]
We have now proved that $( \bigcup_{i \in I} A_i, ( \bigcup_{i \in I} B_i )^{P + Q} ) \in \mathit{IRS}(P + Q)$.

It is obvious that $( \bigcup_{i \in I} A_i, ( \bigcup_{i \in I} B_i )^{P + Q} )$ is an upper bound for
$\{(A_i,B_i) \mid i \in I\}$. On the other hand, assume that $(X,Y) \in  \mathit{IRS}(P + Q)$ is an upper bound for
$\{(A_i,B_i) \mid i \in I\}$. Then $\bigcup_{i \in I}  A_i \subseteq X$ and $\bigcup_{i \in I}  B_i \subseteq Y$. The latter implies
that $(\bigcup_{i \in I}  B_i)^{P + Q} \subseteq Y^{P + Q} = Y$, because $Y \in \wp(U)^{P + Q}$. Thus,
$( \bigcup_{i \in I} A_i, ( \bigcup_{i \in I} B_i )^{P + Q} )$ is the least  upper bound of $\{(A_i,B_i) \mid i \in I\}$.
The equality for meet can be proved analogously.

Finally, the canonical projections $\pi_1$ and $\pi_2$ restricted to $\mathit{IRS}(P + Q)$ are surjective.
For instance, if $X_{P+Q} \in \wp(U)_{P+Q}$, the pair $(X_{P+Q},X^{P+Q})$ belongs to $\mathit{IRS}(P + Q)$ by
Lemma~\ref{Lem:Inclusion}, and $X_{P+Q}$ is its $\pi_1$-image.
\end{proof}

\begin{proposition} \label{Prop:CompletionPolar}
$\mathit{IRS}(P + Q)$ is a complete polarity sublattice of $\wp(U)_{P + Q} \times \wp(U)^{P + Q}$.
\end{proposition}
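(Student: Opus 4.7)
The plan is to leverage Proposition~\ref{Prop:IRSC_Lattice}, which already establishes that $\mathit{IRS}(P + Q)$ is a complete sublattice of the direct product $\wp(U)_{P + Q} \times \wp(U)^{P + Q}$. The polarity on the product, namely ${\sim}(A,B) = (B^c,A^c)$, was verified just before Proposition~\ref{Prop:IRSC_Lattice} to satisfy (O1) and (O2). Hence the only thing still to check is that $\sim$ restricts to $\mathit{IRS}(P + Q)$, that is, $\sim$ sends elements of $\mathit{IRS}(P + Q)$ to elements of $\mathit{IRS}(P + Q)$. The order-reversing and involutive properties are then inherited automatically.

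So I would take any $(A,B) \in \mathit{IRS}(P + Q)$ and verify the four clauses in the definition of $\mathit{IRS}(P + Q)$ for the pair $(B^c,A^c)$. The first two clauses, $B^c \in \wp(U)_{P+Q}$ and $A^c \in \wp(U)^{P+Q}$, follow from the duality $(X^c)_{P+Q} = (X^{P+Q})^c$ (and its symmetric version), which is immediate from the very definition of the optimistic upper approximation as the dual of the lower one. Since $A \in \wp(U)_{P+Q}$ means $A = A_{P+Q}$, we get $A^c = (A^c)^{P+Q} \in \wp(U)^{P+Q}$; analogously for $B^c$. The third clause $B^c \subseteq A^c$ is just the contrapositive of $A \subseteq B$.

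The fourth clause requires $(A^c \setminus B^c) \cap (\Sigma_P \cup \Sigma_Q) = \emptyset$. Here I would use the set-theoretic identity $A^c \setminus B^c = A^c \cap B = B \setminus A$ (valid since $A \subseteq B$), so this condition reduces to $(B \setminus A) \cap (\Sigma_P \cup \Sigma_Q) = \emptyset$, which is exactly the hypothesis that $(A,B) \in \mathit{IRS}(P + Q)$. Thus $(B^c,A^c) \in \mathit{IRS}(P + Q)$, and combining this with Proposition~\ref{Prop:IRSC_Lattice} and the already-checked polarity axioms yields the claim. There is no real obstacle in this proof; the work is mostly bookkeeping, and the key observation is the symmetry $A^c \setminus B^c = B \setminus A$ that makes the singleton-avoidance clause self-dual under $\sim$.
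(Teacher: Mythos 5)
Your proposal is correct and follows essentially the same route as the paper's proof: both reduce the statement to checking that $\sim$ maps $\mathit{IRS}(P+Q)$ into itself, verify the first three clauses via the duality $(X^c)_{P+Q} = (X^{P+Q})^c$ and complementation, and handle the singleton-avoidance clause through the identity $A^c \setminus B^c = B \setminus A$. Nothing is missing.
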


\begin{proof}
We have already shown that $\mathit{IRS}(P + Q)$ is a complete sublattice of $\wp(U)_{P + Q} \times \wp(U)^{P + Q}$.
We need to show that the polarity $\sim$ of $\wp(U)_{P + Q} \times \wp(U)^{P + Q}$ is also a polarity of $\mathit{IRS}(P + Q)$.

Let $(A,B) \in \mathit{IRS}(P + Q)$. Then $A \subseteq B$, $A = A_{P + Q}$, $B = B^{P+Q}$,
and $(B \setminus A) \cap (\Sigma_P \cup \Sigma_Q) = \emptyset$.
Now $B^c = (B^{P+Q})^c = (B^c)_{P + Q}$, that is, $B^c \in \wp(U)_{P + Q}$. Similarly, $A^c = (A_{P + Q})^c = (A^c)^{P + Q}$, meaning that
$A^c \in \wp(U)^{P + Q}$. It is obvious that $B^c \subseteq A^c$ since $A \subseteq B$. Finally,
\[ (A^c \setminus B^c) = A^c \cap B^{cc} = B \cap A^c = B \setminus A.\]
Thus,
\[  (A^c \setminus B^c) \cap  (\Sigma_P \cup \Sigma_Q) =  (B \setminus A) \cap (\Sigma_P \cup \Sigma_Q) = \emptyset. \]
We have now shown that $(B^c,A^c)$ belongs to  $\mathit{IRS}(P + Q)$.
\end{proof}

Next we recall some definitions from \cite{DaPr02}; see also \cite{schroder2003}. Let $P$
be an ordered set. If $L$ is a complete lattice such that there is an order-embedding
$\varphi \colon P \to L$, then $L$ is called a \emph{completion} of $P$ (via $\varphi$).
For any $A \subseteq P$, we define
\[ A^u = \{x \in P \mid (\forall a \in A) \, a \leq x\} \quad \text{and} \quad
A^l = \{x \in P \mid (\forall a \in A) \, a \geq x\}. \]
In addition,
\[ \mathrm{DM}(P) = \{ A \subseteq P \mid A^{ul} = A\} .\]
The ordered set $(\mathrm{DM}(P),\subseteq)$ is a complete lattice, known as the
\emph{Dedekind--MacNeille completion} of $P$.

Let $x \in P$. We denote ${\downarrow}x = \{y \in P \mid y \leq x\}$ and define an order-embedding
$\Phi \colon P \to  \mathrm{DM}(P)$ by $\Phi(x) = {\downarrow}x$. Then 
$\mathrm{DM}(P)$ is a completion of $P$ via the map $\Phi$ preserving all joins and
meets which exist in $P$.

A set $Q \subseteq P$ is called \emph{join-dense} in $P$ if for every element $a \in P$, there is a subset $A$ of $Q$
such that $a = \bigvee_P A$. The dual of join-dense is \emph{meet-dense}. Now $\Phi(P)$ is both join-dense
and meet-dense in  $\mathrm{DM}(P)$. In addition, if $L$ is a complete lattice and $P$ is a subset of $L$ which
is both join-dense and meet-dense in $L$, then $L$ is isomorphic to $\mathrm{DM}(P)$.

The following theorem from \cite{schroder2003} verifies that the Dedekind--MacNeille completion is the smallest
completion by showing that every completion of an ordered set contains a copy of the Dedekind--MacNeille completion.

\begin{theorem}
Let $P$ be an ordered set and let $L$ be a completion of $P$ via order-embedding $\varphi \colon P \to L$.
Then there is an order-embedding $\Psi \colon \mathrm{DM}(P) \to L$ such that $\varphi = \Psi \circ \Phi$.
\end{theorem}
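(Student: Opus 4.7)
The plan is to define $\Psi \colon \mathrm{DM}(P) \to L$ by
\[ \Psi(A) = \bigvee\nolimits_{L} \{\varphi(a) \mid a \in A\}, \]
using the completeness of $L$. This is essentially the only sensible candidate: since we want $\varphi = \Psi \circ \Phi$ and $\Phi(x) = {\downarrow}x$, we must have $\Psi({\downarrow}x) = \varphi(x)$, which is naturally expressed as the join of the $\varphi(y)$ for $y \leq x$. The factorization $\varphi = \Psi \circ \Phi$ then follows immediately, because $\varphi$ is order-preserving and $x \in {\downarrow}x$, so $\varphi(x)$ is both an upper bound of and a member of $\{\varphi(y) \mid y \leq x\}$, hence the join itself.

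Order-preservation of $\Psi$ is equally immediate: $A \subseteq B$ yields $\varphi(A) \subseteq \varphi(B)$, whence the corresponding joins inherit the order. The heart of the argument is to show that $\Psi$ also reflects order, i.e., that $\Psi(A) \leq \Psi(B)$ forces $A \subseteq B$. Here one exploits the closure condition $B = B^{ul}$. Given $a \in A$ and any upper bound $x \in B^{u}$ in $P$, each $\varphi(b)$ with $b \in B$ satisfies $\varphi(b) \leq \varphi(x)$, so $\varphi(x)$ is an upper bound of $\varphi(B)$ in $L$. Hence
\[ \varphi(a) \leq \bigvee\nolimits_{L} \varphi(A) = \Psi(A) \leq \Psi(B) = \bigvee\nolimits_{L} \varphi(B) \leq \varphi(x), \]
and since $\varphi$ is an order-embedding we conclude $a \leq x$. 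As $x \in B^{u}$ was arbitrary, $a \in B^{ul} = B$, so $A \subseteq B$ as required.

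I expect this reflection step to be the main obstacle, since it is the only place where the structure of $\mathrm{DM}(P)$ is genuinely used: the inequality of joins in the ambient completion $L$ must be transported back to an element-wise inclusion in $P$, and this is precisely what the closure identity $B = B^{ul}$ makes possible. Everything else (well-definedness of the join in $L$, monotonicity, and the factorization $\varphi = \Psi \circ \Phi$) is routine verification, and no extra assumption on the completion $L$ beyond the order-embedding property of $\varphi$ enters the argument.
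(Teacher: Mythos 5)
Your proof is correct. Note that the paper itself states this theorem without proof, citing Schr\"oder's book, so there is no in-paper argument to compare against; your construction $\Psi(A)=\bigvee_{L}\varphi(A)$, with the factorization $\Psi(\Phi(x))=\varphi(x)$ coming from $x\in{\downarrow}x$, and order-reflection obtained by bounding $\bigvee_{L}\varphi(B)$ above by $\varphi(x)$ for each $x\in B^{u}$ and then invoking $B=B^{ul}$, is exactly the standard argument and every step checks out (including the degenerate case $A=\emptyset$, where the inclusion $A\subseteq B$ is vacuous).
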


As we already pointed out, $\mathit{IRS}(P + Q)$ is a complete lattice containing $\mathit{RS}(P + Q)$, that is,
\mbox{$\mathit{IRS}(P + Q)$} is a completion of $\mathit{RS}(P + Q)$ via the identity mapping. To prove that
$\mathit{IRS}(P + Q)$ is the smallest completion of  $\mathit{RS}(P + Q)$, we need to show that
$\mathit{RS}(P + Q)$ is join-dense and meet-dense in $\mathit{IRS}(P + Q)$.

\begin{theorem} \label{Thm:smallest completion}
$\mathit{IRS}(P + Q)$ is the smallest completion of $\mathit{RS}(P + Q)$.
\end{theorem}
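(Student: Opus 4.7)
The plan is to show that $\mathit{RS}(P+Q)$ is both join-dense and meet-dense in $\mathit{IRS}(P+Q)$; by the density characterization recalled just before the statement this yields $\mathit{IRS}(P+Q) \cong \mathrm{DM}(\mathit{RS}(P+Q))$, and the preceding theorem then identifies $\mathit{IRS}(P+Q)$ as the smallest completion of $\mathit{RS}(P+Q)$. The polarity $\sim \colon (A,B) \mapsto (B^c,A^c)$ from Proposition~\ref{Prop:CompletionPolar} restricts to a bijection on $\mathit{RS}(P+Q)$ because $(X_{P+Q})^c = (X^c)^{P+Q}$ and $(X^{P+Q})^c = (X^c)_{P+Q}$; hence it is an order-reversing automorphism of $\mathit{IRS}(P+Q)$ interchanging meets and joins, and meet-density follows once join-density is established. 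So I concentrate on the latter.

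Fix $(A,B) \in \mathit{IRS}(P+Q)$ and set $\mathcal{S} := \{(A',B') \in \mathit{RS}(P+Q) \mid (A',B') \leq (A,B)\}$. Using the join formula supplied by Proposition~\ref{Prop:IRSC_Lattice}, proving $(A,B) = \bigvee \mathcal{S}$ in $\mathit{IRS}(P+Q)$ reduces to the two equalities
\[ \bigcup_{(A',B') \in \mathcal{S}} A' = A \qquad \text{and} \qquad \Big(\bigcup_{(A',B') \in \mathcal{S}} B'\Big)^{P+Q} = B. \]
The first is handled by the pair $(A,A^{P+Q})$: since $A \in \wp(U)_{P+Q}$ we have $A_{P+Q}=A$, and $A \subseteq B$ gives $A^{P+Q} \subseteq B^{P+Q}=B$, so $(A,A^{P+Q}) \in \mathcal{S}$ already pins down the union.

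For the second equality the nontrivial inclusion is $B \subseteq (\bigcup B')^{P+Q}$. Given $b \in B = B^P \cap B^Q$, I would produce one element of $\mathcal{S}$ whose upper part meets $P(b)$ and one whose upper part meets $Q(b)$; placing these in the union forces $b$ into $(\bigcup B')^{P+Q}$. For the $P$-side, pick $p \in P(b)\cap B$. If $p \in A$, the pair $(A,A^{P+Q}) \in \mathcal{S}$ already contains $p$. If instead $p \in B \setminus A$, the defining condition of $\mathit{IRS}(P+Q)$ yields $p \notin \Sigma_P \cup \Sigma_Q$, so $|P(p)|\geq 2$ and $|Q(p)|\geq 2$. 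Taking $X := \{p\}$ then gives $X_P = X_Q = \emptyset$ while $X^{P+Q} = P(p)\cap Q(p) \subseteq B$, because every $y \in P(p)\cap Q(p)$ satisfies $p \in P(y)\cap B$ and $p \in Q(y)\cap B$, hence $y \in B^P \cap B^Q = B$. The resulting rough set $(\emptyset, P(p)\cap Q(p))$ belongs to $\mathcal{S}$ and contains $p \in P(b)$ in its upper component. The symmetric construction with some $q \in Q(b) \cap B$ handles the $Q$-side, giving join-density, and meet-density follows by applying $\sim$.

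The only delicate step is the construction of witness rough sets for elements $b \in B \setminus A$: the naive candidate $A \cup \{b\}$ can inflate the lower approximation past $A$ whenever $P(b) \setminus \{b\} \subseteq A$ or $Q(b) \setminus \{b\} \subseteq A$. The singleton $\{p\}$ succeeds precisely because the defining condition $(B\setminus A)\cap (\Sigma_P\cup\Sigma_Q)=\emptyset$ of $\mathit{IRS}(P+Q)$ forces $|P(p)|,|Q(p)|\geq 2$ for the relevant $p$, making the lower approximation of $\{p\}$ empty. This is exactly the obstruction that the definition of $\mathit{IRS}(P+Q)$ has been engineered to remove, which is in turn why $\mathit{IRS}(P+Q)$, rather than some larger set of pairs, is the smallest completion.
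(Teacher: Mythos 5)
Your proof is correct and follows essentially the same route as the paper: join-density of $\mathit{RS}(P+Q)$ in $\mathit{IRS}(P+Q)$ established via singleton rough sets whose lower approximation vanishes precisely because the condition $(B\setminus A)\cap(\Sigma_P\cup\Sigma_Q)=\emptyset$ rules out singleton $P$- or $Q$-classes in $B\setminus A$, with meet-density then obtained by applying the polarity $\sim$. The only (harmless) differences are cosmetic: you cover the lower component with the single witness $(A,A^{P+Q})$ where the paper uses the rough sets of the classes $P(x)$ and $Q(x)$ for $x\in A$, and your case split on $p\in A$ versus $p\in B\setminus A$ is the contrapositive of the paper's split on whether $\{y\}_{P+Q}$ is empty.
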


\begin{proof} It is enough to prove that $\mathit{RS}(P + Q)$ is join-dense in $\mathit{IRS}(P + Q)$,
because these two ordered sets are self-dual by the map $\sim$. Indeed, if  $\mathit{RS}(P + Q)$ is join-dense in
$\mathit{IRS}(P + Q)$ and $(A,B) \in \mathit{IRS}(P + Q)$, then ${\sim}(A,B) = \bigvee_{i \in I} (X_i,Y_i)$ for some
$\{ (X_i,Y_i) \mid i \in I \} \subseteq \mathit{RS}(P + Q)$. Now
\[ (A,B) = {\sim} \bigvee_{i \in I} (X_i,Y_i)  = \bigwedge_{i \in I} {\sim} (X_i,Y_i).\]
Because ${\sim} (X_i,Y_i) = ({Y_i}^c,{X_i}^c)$ belongs to $\mathit{RS}(P + Q)$ for each $i \in I$,
we conclude that $\mathit{RS}(P + Q)$ is meet-dense in $\mathit{IRS}(P + Q)$.

In order to prove that $\mathit{RS}(P + Q)$ is join-dense in $\mathit{IRS}(P + Q)$, let $(A,B) \in \mathit{IRS}(P + Q)$.
We consider the set of pairs
\[ \mathcal{H} = \{ (X_{P + Q}, X^{P + Q}) \in \mathit{RS}(P + Q) \mid (X_{P + Q}, X^{P + Q}) \leq (A,B) \}. \]
Clearly $(A,B)$ is an upper bound for $\mathcal{H}$, so $\bigvee \mathcal{H} \leq (A,B)$. We need to show that also
$(A,B) \leq \bigvee \mathcal{H}$. 

Let $x \in A = A_{P + Q} = A_P \cup A_Q$. If $x \in A_P$, then $P(x) \subseteq A \subseteq B$ and 
$P(x)_{P + Q} \subseteq A_{P + Q} = A$ and $P(x)^{P + Q} \subseteq B^{P + Q} = B$. This means that
the rough set $(P(x)_{P + Q}, P(x)^{P + Q})$ belongs to $\mathcal{H}$. In addition, $x \in P(x)_P \subseteq P(x)_{P + Q}$
implies that
\[ x \in P(x)_{P + Q} \subseteq \bigcup \{ X_{P+Q} \mid ( X_{P+Q},  X^{P+Q}) \in \mathcal{H} \}.\]
The case $x \in A_Q$ is analogous. Hence,  $A = \bigcup \{ X_{P+Q} \mid ( X_{P+Q},  X^{P+Q}) \in \mathcal{H} \}$.

Let $y \in B = B^{P+Q}$. Then $\{y\}^{P + Q} \subseteq B^{P+Q} = B$. If $\{y\}_{P+Q} = \emptyset$, then
$(\{y\}_{P+Q},\{y\}^{P+Q}) \leq (A,B)$ and the rough set $(\{y\}_{P+Q},\{y\}^{P+Q})$ belongs to $\mathcal{H}$. Thus,
\begin{align*}
y \in \{y\}^{P+Q} 
& \subseteq \bigcup \{ X^{P + Q} \mid (X_{P + Q}, X^{P + Q}) \in \mathcal{H} \} \\
& \subseteq \Big ( \bigcup \{ X^{P + Q} \mid (X_{P + Q}, X^{P + Q}) \in \mathcal{H} \} \Big )^{P + Q}. 
\end{align*}

If $\{y\}_{P + Q} = \{y\}_P \cup \{y\}_Q \neq \emptyset$, then $P(y) = \{y\}$ or $Q(y) = \{y\}$, meaning that
$y \in \Sigma_P \cup \Sigma_Q$. Because $(A,B) \in \mathit{IRS}(P + Q)$, $A \subseteq B$ and
$(B \setminus A) \cap (\Sigma_P \cup \Sigma_Q) = \emptyset$. This gives that $y \notin B \setminus A$. Because
$B = A \cup (B \setminus A)$, we have that
\[ y \in A = \bigcup \{ X_{P+Q} \mid ( X_{P+Q},  X^{P+Q}) \in \mathcal{H} \} \subseteq
   \Big ( \bigcup \{ X^{P + Q} \mid (X_{P + Q}, X^{P + Q}) \in \mathcal{H} \} \Big )^{P + Q}. \]
We have now proved that also
\[ B = \Big ( \bigcup \{ X^{P + Q} \mid (X_{P + Q}, X^{P + Q}) \in \mathcal{H} \} \Big )^{P + Q} \]
holds. Therefore, $(A,B) = \bigvee \mathcal{H}$ and the proof is completed.
\end{proof}

We end this work with some consequences of Theorem~\ref{Thm:smallest completion}.

\begin{corollary} The following are equivalent:
\begin{enumerate}[\rm (a)]
\item $\mathit{RS}(P + Q)$ is a complete lattice;
\item $\mathit{RS}(P + Q)$ is a complete polarity sublattice of $\wp(U)_{P + Q} \times \wp(U)^{P + Q}$;
\item $\mathit{RS}(P + Q)$ is a complete subdirect product of $\wp(U)_{P + Q}$ and $\wp(U)^{P + Q}$.
\end{enumerate}
\end{corollary}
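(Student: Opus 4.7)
The implications $(b) \Rightarrow (a)$ and $(c) \Rightarrow (a)$ are immediate: a complete sublattice of the complete lattice $\wp(U)_{P+Q} \times \wp(U)^{P+Q}$ is, in particular, a complete lattice. My plan is to derive $(b)$ and $(c)$ from $(a)$ by establishing the stronger identity $\mathit{RS}(P+Q) = \mathit{IRS}(P+Q)$ under assumption $(a)$; once this is in hand, Proposition~\ref{Prop:CompletionPolar} delivers $(b)$ and Proposition~\ref{Prop:IRSC_Lattice} delivers $(c)$.

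The inclusion $\mathit{RS}(P+Q) \subseteq \mathit{IRS}(P+Q)$ is Lemma~\ref{Lem:Inclusion}, so only the reverse inclusion needs work. Fix $(A,B) \in \mathit{IRS}(P+Q)$. The density statements from Theorem~\ref{Thm:smallest completion} (join-density is proved there explicitly, and meet-density follows by dualization via the polarity $\sim$) produce subsets $\mathcal{H}_1, \mathcal{H}_2 \subseteq \mathit{RS}(P+Q)$ such that $(A,B) = \bigvee \mathcal{H}_1$ and $(A,B) = \bigwedge \mathcal{H}_2$, where both the join and the meet are computed in $\mathit{IRS}(P+Q)$. Invoking assumption $(a)$, let $\alpha$ and $\beta$ denote the join of $\mathcal{H}_1$ and the meet of $\mathcal{H}_2$ taken now in $\mathit{RS}(P+Q)$. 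Because the orders on $\mathit{RS}(P+Q)$ and $\mathit{IRS}(P+Q)$ are both coordinatewise, $\alpha$ is still an upper bound of $\mathcal{H}_1$ and $\beta$ is still a lower bound of $\mathcal{H}_2$ when regarded inside $\mathit{IRS}(P+Q)$; this yields $\beta \leq (A,B) \leq \alpha$.

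The crucial squeezing step is the observation that every $(X,Y) \in \mathcal{H}_2$ satisfies $(X,Y) \geq (A,B) \geq x$ for each $x \in \mathcal{H}_1$, so $(X,Y)$ is already an upper bound of $\mathcal{H}_1$ inside $\mathit{RS}(P+Q)$. Hence $\alpha \leq (X,Y)$ for every $(X,Y) \in \mathcal{H}_2$, which makes $\alpha$ a lower bound of $\mathcal{H}_2$ in $\mathit{RS}(P+Q)$ and forces $\alpha \leq \beta$. Combined with $\beta \leq (A,B) \leq \alpha$, this squeezes $\alpha = \beta = (A,B)$, placing $(A,B)$ in $\mathit{RS}(P+Q)$. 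I expect the only real subtlety to be the careful bookkeeping of whether each particular join or meet is being computed in $\mathit{RS}(P+Q)$ or in $\mathit{IRS}(P+Q)$; once that is done correctly, no further obstacle is anticipated.
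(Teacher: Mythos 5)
Your proposal is correct and follows the same route as the paper: the implications (b)$\Rightarrow$(a) and (c)$\Rightarrow$(a) are immediate, and (a) is upgraded to the identity $\mathit{RS}(P+Q)=\mathit{IRS}(P+Q)$, after which Propositions~\ref{Prop:IRSC_Lattice} and~\ref{Prop:CompletionPolar} give (c) and (b). The only difference is that you spell out, via the join- and meet-density squeezing argument, the step the paper leaves implicit (that a complete lattice which is join- and meet-dense in its smallest completion coincides with it), and that argument is sound.
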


\begin{proof}
If (a) holds, that is, $\mathit{RS}(P + Q)$ is a complete lattice, then  $\mathit{RS}(P + Q)$ equals $\mathit{IRS}(P + Q)$.
From Propositions \ref{Prop:IRSC_Lattice} and \ref{Prop:CompletionPolar} we obtain (b) and (c).
Trivially, (b) and (c) both separately imply (a).
\end{proof}

Related to coherence and distributivity we can present the following equivalent conditions.

\begin{proposition} \label{Prop:Distributivity}
The following are equivalent.
\begin{enumerate}[\rm (a)]
\item $\mathit{IRS}(P + Q)$ is distributive;
\item $\wp(U)_{P + Q}$ and $\wp(U)^{P + Q}$ are distributive;
\item $P$ and $Q$ are coherent;
\item $\mathit{RS}(P + Q)$ is completely distributive regular double Stone lattice.
\end{enumerate}   
\end{proposition}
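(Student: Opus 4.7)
The plan is to prove the implications cyclically, (a)$\Rightarrow$(b)$\Rightarrow$(c)$\Rightarrow$(d)$\Rightarrow$(a), and most of the work has already been done in the paper; each step either invokes an earlier result or uses a structural fact about subdirect products and completions.

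For (a)$\Rightarrow$(b), I would use Proposition~\ref{Prop:IRSC_Lattice}: the canonical projections $\pi_1,\pi_2$ restricted to $\mathit{IRS}(P+Q)$ are surjective onto $\wp(U)_{P+Q}$ and $\wp(U)^{P+Q}$ respectively, and because the joins and meets in $\mathit{IRS}(P+Q)$ agree with those in the ambient direct product, these restrictions are complete lattice homomorphisms. Distributivity is preserved by homomorphic images, so both factor lattices are distributive. Step (b)$\Rightarrow$(c) is essentially a restatement of Proposition~\ref{Prop:ApproximationsDistributive}, since distributivity of $\wp(U)^{P+Q}$ alone already forces $P$ and $Q$ to be coherent.

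For (c)$\Rightarrow$(d) I would cite Proposition~\ref{Prop:LatticeProperties}(i), which gives directly that coherence of $P$ and $Q$ makes $\mathit{RS}(P+Q)$ a completely distributive regular double Stone lattice. Finally, for (d)$\Rightarrow$(a), note that (d) implies in particular that $\mathit{RS}(P+Q)$ is a complete lattice. Since $\mathit{RS}(P+Q)\subseteq \mathit{IRS}(P+Q)$ by Lemma~\ref{Lem:Inclusion} and $\mathit{IRS}(P+Q)$ is by Theorem~\ref{Thm:smallest completion} the smallest completion of $\mathit{RS}(P+Q)$, the identity embedding already gives a completion, so $\mathit{RS}(P+Q)=\mathit{IRS}(P+Q)$. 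Distributivity of $\mathit{RS}(P+Q)$ (asserted in (d)) therefore transfers to $\mathit{IRS}(P+Q)$.

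None of the steps looks genuinely difficult; the only subtle point is the last one, where one must justify $\mathit{RS}(P+Q)=\mathit{IRS}(P+Q)$ from completeness of $\mathit{RS}(P+Q)$ alone. That is the main thing to be explicit about, and it is handled by appealing to the minimality of the Dedekind--MacNeille-style completion built in Theorem~\ref{Thm:smallest completion} together with the observation that a complete lattice is its own smallest completion via the identity embedding.
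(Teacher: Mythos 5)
Your proposal is correct and follows essentially the same route as the paper: (a)$\Rightarrow$(b) via the surjective projections of the complete subdirect product, (b)$\Rightarrow$(c) by Proposition~\ref{Prop:ApproximationsDistributive}, (c)$\Rightarrow$(d) by Proposition~\ref{Prop:LatticeProperties}(i), and (d)$\Rightarrow$(a) by closing the cycle. The only difference is that the paper dismisses (d)$\Rightarrow$(a) as trivial, whereas you spell out the needed fact that completeness of $\mathit{RS}(P+Q)$ forces $\mathit{RS}(P+Q)=\mathit{IRS}(P+Q)$ (which is exactly the content of the corollary following Theorem~\ref{Thm:smallest completion}), so your version is if anything slightly more explicit.
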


\begin{proof}
Because $\wp(U)_{P + Q}$ and $\wp(U)^{P + Q}$ are homomorphic images of  $\mathit{IRS}(P + Q)$,
they are distributive whenever  $\mathit{IRS}(P + Q)$ is, so (a) implies (b). That (b) implies (c)
is clear by Proposition~\ref{Prop:ApproximationsDistributive}.
If $P$ and $Q$ are coherent, then $RS(P + Q) = RS(P \cap Q)$ and therefore (c) implies (d)
by Proposition~\ref{Prop:LatticeProperties}(i). Trivially, (d) yields (a).
\end{proof}

\begin{remark} \label{Rem:Nondistributive}
Let $U$ be finite. If $\mathit{RS}(P + Q)$ forms a lattice, it is also a complete lattice equal to $\mathit{IRS}(P + Q)$.
By Proposition~\ref{Prop:Distributivity} we have that $\mathit{RS}(P + Q)$ is distributive if and only if $P$ and $Q$ are coherent.
\end{remark}

\section*{Some concluding remarks}

In this work, we have studied multigranular approximations and rough sets defined by them. The so-called ``optimistic'' 
approximations $\wp(U)_{P + Q}$ and $\wp(U)^{P + Q}$ form dually isomorphic complete lattices which in general are not distributive.
The distributivity is in fact equivalent to the equivalences $P$ and $Q$ being coherent. 
Actually, if $P$ and $Q$ are coherent, then $\wp(U)_{P + Q}$ and $\wp(U)^{P + Q}$ are equal to the
complete atomistic Boolean lattice $\wp(U)_{P \cap Q} = \wp(U)^{P \cap Q}$.
The rough set system $\mathit{RS}(P + Q)$ determined by optimistic approximations is not necessarily a lattice. 
We introduced a condition (C) in terms of singleton equivalence classes of $P$, $Q$ and $P \cap Q$, and showed that whenever (C) holds,
$\mathit{RS}(P + Q)$ is a complete lattice. We also presented the smallest completion $\mathit{IRS}(P + Q)$ of
$\mathit{RS}(P + Q)$ containing it, and showed that $\mathit{IRS}(P + Q)$ is distributive if and only if $P$ and $Q$
are coherent. On the other hand, coherence of $P$ and $Q$ implies that $\mathit{RS}(P + Q)$ is a complete 
regular double Stone lattice. Moreover, if $U$ is finite, then distributivity of $\mathit{RS}(P + Q)$ gives that $P$ and $Q$
are coherent.

Because $P \cup Q$ is a tolerance, the structure of ``pessimistic'' approximations $\wp(U)_{P \cup Q}$ and $\wp(U)^{P \cup Q}$ 
is largely known by existing literature. In general, $\wp(U)_{P \cup Q}$ and $\wp(U)^{P \cup Q}$ are self-dual
mutually isomorphic complete lattices. In addition, if $P \cup Q$ is induced by an irredundant covering, then
$\wp(U)_{P \cup Q}$ and $\wp(U)^{P \cup Q}$ are complete atomistic Boolean lattices. If $P$ and $Q$ are coherent,
then $P \cup Q$ is an equivalence and $\wp(U)_{P \cup Q} = \wp(U)^{P \cup Q}$ is also a complete atomistic Boolean lattice.
The ordered set $\mathit{RS}(P \cup Q)$ is not generally a lattice, but if $P \cup Q$ is induced by an irredundant covering, then
$\mathit{RS}(P \cup Q)$ a complete lattice forming a regular pseudocomplemented Kleene algebra. If $P$ and $Q$ are coherent, then 
$\mathit{RS}(P \cup Q)$ is a complete  regular double Stone lattice.

%\bibliographystyle{abbrv}
%\bibliography{multi.bib}

\end{document}